\numberwithin{equation}{section} \theoremstyle{plain}
\newtheorem{theorem}{Theorem}[section]
\newtheorem{lemma}{Lemma}[section]
\newtheorem{corollary}{Corollary}[section]
\newtheorem{proposition}{Proposition}[section]
\newtheorem{definition}{Definition}
\newtheorem{remark}{Remark}[section]
\def\bC{\mathbb C}
\def\bE{\mathbb E}
\def\bN{\mathbb N}
\def\bR{\mathbb R}
\def\cC{\mathcal C}
\def\cI{\mathcal I}
\def\cJ{\mathcal J}
\def\cP{\mathcal P}
\def\y{\mathbf y}
\def\Tr{\mathrm {Tr}}
\def\Var{\mathrm {Var}}
\def\deg{\mathrm {deg}}
\begin{document}

\title{On spectrum of sample covariance matrices from large tensor vectors}

\date{\today}
\author{Wangjun Yuan\thanks{Department of Mathematics, University of Luxembourg. E-mail: ywangjun@connect.hku.hk}}
\maketitle

\begin{abstract}
    In this paper, we investigate the limiting empirical spectral distribution (LSD) of sums of independent rank-one $k$-fold tensor products of $n$-dimensional vectors as $k,n \to \infty$. Assuming that the base vectors are complex random variables with unit modular, we show that the LSD is the Mar\v{c}enko-Pastur law. Comparing with the existing results, our limiting setting allows $k$ to grow much faster than $n$. Consequently, we obtain the necessary and sufficient conditions for Mar\v{c}enko-Pastur law to serve as the LSD of our matrix model. Our approach is based on the moment method. 
\end{abstract}

\noindent{\bf AMS 2000 subject classifications:} Primary 60B20; Secondary 15B52.

\medskip 

\noindent{\bf Keywords and phrases:} Large $k$-fold tensors; Eigenvalue spectral distribution; Mar\v{c}enko-Pastur law; Quantum information theory. 

\section{Introduction}
\label{sec:intro}

For $n \in \bN$, let $\y = \frac{1}{\sqrt{n}} (\xi_1, \ldots, \xi_n) \in \bC^n$, where $\{\xi_1, \ldots, \xi_n\}$ is a family of i.i.d. centered random variables with unit variance, and let $\{\y_{\alpha}^{(l)}: 1 \le \alpha \le m, 1 \le l \le k\}$ be a family of i.i.d. copies of $\y$.
For $1 \le \alpha \le m$, define $Y_{\alpha} = \y_{\alpha}^{(1)} \otimes \cdots \otimes \y_{\alpha}^{(k)}$ yielding a $k$-fold tensor product. We identify each $Y_{\alpha}$ as an $n^k$-dimensional vector, and denote by $Y = (Y_1, \ldots, Y_m)$. Let $\{\tau_1, \tau_2, \ldots\}$ be a sequence of real numbers, we consider the sum of $m$ independent rank-$1$ Hermitian matrices:
\begin{align}\label{eq:matrix}
	M_{n,k,m} = \sum_{\alpha=1}^m \tau_{\alpha} Y_{\alpha} Y_{\alpha}^*,
\end{align}
which is an $n^k \times n^k$ Hermitian matrix.

In statistics, the model \eqref{eq:matrix} is called the sample covariance matrix. It helps to understand the population covariance matrix. The limiting empirical spectral distribution (LSD) of $M_{n,k,m}$ can serve as a test statistics. The simplest case $k=1$, which corresponds to the population vector with i.i.d. entries, was well-studied in the literature. Under appropriate moment conditions on $\xi_1$, the LSD was obtained in the seminal paper \cite{MP67} when $n \to +\infty$ and $m/n^k \to c$ for some positive constant $c$. When $\tau_\alpha=1$ for all $\alpha$, the resulting LSD is the famous Mar\v{c}enko-Pastur law. This probability distribution has a density function given by
\begin{align} \label{eq-MP law}
    p(x) = \dfrac{\sqrt{((1+\sqrt{c})^2-x)(x-(1-\sqrt{c})^2)}}{2\pi x} {\bf 1}_{[(1-\sqrt{c})^2,(1+\sqrt{c})^2]}(x) + (1-c) \delta_0(dx) {\bf 1}_{0<c<1}.
\end{align}
Further details can be found in \cites{Anderson2010, Bai2010}.
Subsequently, numerous works on the Mar\v{c}enko-Pastur law have emerged in the literature, including \cite{BaiZhou08}, \cite{Pajor09}, \cite{Silv95}, and the reference therein. Later, the necessary and sufficient conditions on $Y_1$ were carried out in \cite{Yaskov2016} to ensure that the Mar\v{c}enko-Pastur law is the LSD of $M_{n,1,m}$ when $\tau_\alpha=1$ for all $\alpha$.

Another motivation to study the model \eqref{eq:matrix} comes from quantum information theory.
We consider a classical probability problem of distributing $m$ balls randomly into $n$ bins. Using the terminology of matrices, this classical probability problem is equivalent to the spectrum of the $M_{n,1,m}$ when $Y_\alpha$ is chosen randomly from the canonical basis $\{e_1, \ldots, e_n\}$ of $\bC^n$. The eigenvalues of the $M_{n,1,m}$ are the frequency of occurrence of vectors $e_1, \ldots, e_n$ in the family $\{Y_\alpha: 1 \le \alpha \le m\}$.
It was considered as a quantum analog for $M_{n,1,m}$ if the vector $Y_\alpha$ is picked randomly from the unit sphere on $\bC^n$.
A modified version of the quantum problem, which was introduced in \cite{Hastings2012}, is to choose the vector $Y_1$ from the random product states in $(\bC^n)^{\otimes k}$. When $k$ and $m/n^k$ are fixed, and the base vector $\y$ is Gaussian, \cite{Hastings2012} established the convergence in expectation of the ESD towards the Mar\v{c}enko-Pastur law \eqref{eq-MP law} as $n \to +\infty$ by computing the moments.
In the realm of quantum physics and quantum information theory, it is natural to explore systems with a multitude of quantum states.
The paper \cite{Collins2011} characterizes the quantum entanglement of structured random states and studies the spectral density of the reduced state when the number of the quantum states $k$ is large.
Besides, the asymptotic behavior of the average entropy of entanglement for elements of an ensemble of random states associated with a graph was studied in \cite{Collins2013} when the dimension of the quantum subsystem is large.

The cases $k \ge 2$ and $k=1$ are very different. The tensor structure appears when $k \ge 2$, which results in the dependence of the entries of $Y_\alpha$ and non-unitary invariant. In a recent work \cite{Lytova2018}, the LSD for the $k$-fold tensor model $M_{n,k,m}$ was explored when $k$ is large. In the special case where $\tau_{\alpha} \equiv 1$, the LSD precisely follows the Mar\v{c}enko-Pastur law \eqref{eq-MP law}. A central limit theorem (CLT) is also established for a class of linear spectral statistics following the approach of \cite{LytovaPastur09} when $k=2$. The main setup in \cite{Lytova2018} is that the number of tensor folds ($k$) must be sufficiently small in comparison to the spatial dimension ($n$). More precisely, $k/n \to 0$ is required for the validity of the LSD. In a more recent study, \cite{BYY2022} considered the case where $k = O(n)$ for the model \eqref{eq:matrix}. Under the condition $k/n \to d$, they derived the limiting moment sequence of the empirical spectral distribution (ESD) of $M_{n,k,m}$, assuming the finiteness of the all moments of $\xi_1$. It is interesting that, the fourth moment of $\xi_1$ contributes to the limiting moment sequence. If $\tau_{\alpha} \equiv 1$, the resulting limiting moment sequence corresponds to the Mar\v{c}enko-Pastur law \eqref{eq-MP law} when either $d=0$ or $d>0$ and the fourth moment of $\xi_1$ is 1.

In the present paper, we study the model \eqref{eq:matrix} with $\xi_1$ chosen from the unit circle on the complex plane. Our focus lies in the scenario where $k$ goes to infinity much faster than the setting in \cite{BYY2022}. We derive the limiting moment sequence of the ESD of $M_{n,k,m}$ when $n,k$ goes to infinity. When $\tau_{\alpha} \equiv 1$, the limiting moment sequence reveals that the LSD is exactly Mar\v{c}enko-Pastur law \eqref{eq-MP law}. From the point of view of probability theory, we remove the restriction on the speed of $k$ approaching infinity, as required in \cites{Lytova2018,BYY2022}. Our results even allows that $k/n$ does not have a limit when $n \to \infty$. In practical terms, one would anticipate that the dimension of a system remains fixed and is reused multiple times, which leads to the scenario that $k$ tends to infinity while $n$ is fixed. The study of the model \eqref{eq:matrix} with fixed $n$ remains an open question, and we plan to address this case in our future work. As an variant of this real-world scenario, the results in this paper apply to the setting where $k \gg n \gg 1$.

We would like to remark that the tensor $Y_\alpha$ introduced above is the non-symmetric random tensor model. Instead of considering the tensor product of i.i.d. vectors, the $k$-fold tensor product $\y^{\otimes k}$ of the same random vector $\y \in \bC^n$ is known as the symmetric random tensor model. The limiting spectral distribution of the Hermitian matrix \eqref{eq:matrix} constructed by i.i.d. copies of $\y^{\otimes k}$ was studied in \cites{Vershynin2021, Yaskov2023}.

Throughout the paper, we assume that the parameters $m,n,k$ grow towards infinity following the proportion
\begin{align} \label{eq-def-ratio}
	\dfrac{m}{n^k} \to c
\end{align}
for some constant $c \in (0,\infty)$. The following theorem is the first main results of the paper, where we establish the convergence in expectation of the moments.

\begin{theorem} \label{Thm-main}
Let $M_{n,k,m}$ be in \eqref{eq:matrix} with $|\xi_1|=1$. Suppose that for all $q \in \bN$,
\begin{align} \label{eq-condition-moment convergence}
    \dfrac{1}{m} \sum_{j=1}^m \tau_j^q \to m_q^{(\tau)}, \quad m \to +\infty.
\end{align}
Assume that \eqref{eq-def-ratio} holds. Then for any fixed $p \in \bN_+$, we have
\begin{align*}
    \lim_{n,k \to +\infty} \dfrac{1}{n^k} \bE \big[ \Tr M_{n,k,m}^p \big]
   = \sum_{s=1}^p c^s \sum_{\alpha \in \cC_{s,p}^{(1)}} \left( \prod_{t=1}^s m_{\deg_t(\alpha)}^{(\tau)} \right).
\end{align*}
Here, $\deg_t(\alpha)$ is the frequency of $t$ in the sequence $\alpha$ and is given by \eqref{eq:deg_t}, and $\cC_{s,p}^{(1)}$ is a set of sequences that is defined in Lemma \ref{lem-Bai}.
\end{theorem}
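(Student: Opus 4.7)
The plan is to prove the moment convergence by the classical moment method, leveraging the tensor structure through the independence of the $k$ base factors and using the unit-modulus hypothesis to control the resulting exponentials in $k$. Using the cyclic property of the trace and the tensor factorisation $\langle Y_\alpha, Y_\beta\rangle = \prod_{l=1}^{k} \langle \y_\alpha^{(l)}, \y_\beta^{(l)}\rangle$,
\begin{align*}
\Tr M_{n,k,m}^{p} = \sum_{\alpha_1,\ldots,\alpha_p=1}^{m} \Big(\prod_{r=1}^{p} \tau_{\alpha_r}\Big) \prod_{l=1}^{k} \prod_{r=1}^{p} \langle \y_{\alpha_r}^{(l)}, \y_{\alpha_{r-1}}^{(l)}\rangle, \qquad \alpha_0 = \alpha_p.
\end{align*}
Since the $k$ base families are i.i.d.\ and independent across $l$, taking expectation gives
\begin{align*}
\bE\prod_{l=1}^{k}\prod_{r=1}^{p}\langle \y_{\alpha_r}^{(l)},\y_{\alpha_{r-1}}^{(l)}\rangle = E_\pi^{k},\qquad E_\pi := \bE\prod_{r=1}^{p}\langle \y_{\alpha_r}^{(1)},\y_{\alpha_{r-1}}^{(1)}\rangle,
\end{align*}
where $E_\pi$ depends on $\vec{\alpha}$ only through the set partition $\pi$ it induces on $\{1,\ldots,p\}$. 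The entire $k$-dependence is thereby concentrated in a single scalar raised to the $k$-th power, which is what makes the regime $k\to\infty$ tractable.

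Next, I would group tuples by $\pi$. For $\pi$ with $s$ blocks of sizes $d_1,\ldots,d_s$, there are $m(m-1)\cdots(m-s+1)\sim m^s$ such tuples, and the moment hypothesis \eqref{eq-condition-moment convergence} yields
\begin{align*}
\sum_{\vec{\alpha}\text{ induces }\pi}\prod_{r=1}^{p}\tau_{\alpha_r} = m^{s}\prod_{t=1}^{s}m_{d_t}^{(\tau)}\,(1+o(1)).
\end{align*}
Combined with $m/n^{k}\to c$, the contribution of $s$-block partitions to $n^{-k}\bE\Tr M_{n,k,m}^{p}$ is $c^{s}\prod_{t}m_{d_t}^{(\tau)}\cdot n^{k(s-1)}E_\pi^{k}\,(1+o(1))$. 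The theorem then reduces to showing $n^{k(s-1)}E_\pi^{k}\to 1$ for $\pi\in\cC_{s,p}^{(1)}$ and $n^{k(s-1)}E_\pi^{k}\to 0$ for all other $\pi$.

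The main obstacle is precisely this sharp estimate for $E_\pi^{k}$: since $k$ diverges with $n$, even an $O(1)$ multiplicative correction to $E_\pi$ would blow up once raised to the $k$-th power, so sharp (not merely order-of-magnitude) control of $E_\pi$ is required. The unit-modulus hypothesis is decisive here on two fronts. First, $\|\y_\alpha^{(l)}\|^{2}=1$ \emph{deterministically}, so $\langle \y_\alpha^{(l)},\y_\alpha^{(l)}\rangle=1$ exactly whenever two consecutive $\alpha_r$'s coincide, and Cauchy--Schwarz yields $|E_\pi|\le 1$ uniformly in $\pi$. Second, expanding each inner product as $\frac{1}{n}\sum_{i}\overline{\xi_{i,\alpha_r}^{(1)}}\xi_{i,\alpha_{r-1}}^{(1)}$ and evaluating the resulting multiple sum using the i.i.d.\ structure of the base variables shows that for $\pi\in\cC_{s,p}^{(1)}$ (the non-crossing cyclic partitions parametrised in Lemma~\ref{lem-Bai}) all surviving Wick contributions collapse to products of $\bE[|\xi|^{2}]=1$ and give $E_\pi=n^{-(s-1)}$ \emph{exactly}, independently of any higher moment of $\xi_1$; for every other partition the index-pairing constraints force at least one excess factor of $n^{-1}$, so $|E_\pi|=O(n^{-s})$ and hence $n^{k(s-1)}|E_\pi|^{k}=O(n^{-k})\to 0$. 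Without $|\xi_1|=1$, the squared norm $\|\y_\alpha^{(l)}\|^{2}$ would fluctuate around $1$ by $O(n^{-1/2})$ and its $k$-th power would be uncontrolled whenever $k\gg n^{1/2}$; this is precisely the obstruction that restricted \cite{BYY2022} to $k=O(n)$.

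Combining the three steps, $n^{-k}\bE\Tr M_{n,k,m}^{p}$ tends to $\sum_{s=1}^{p}c^{s}\sum_{\pi\in\cC_{s,p}^{(1)}}\prod_{t=1}^{s}m_{d_t(\pi)}^{(\tau)}$, which matches the formula in the theorem after identifying the block sizes with the degree function $\deg_t$ via Lemma~\ref{lem-Bai}. The non-routine part of the argument lies entirely in the uniform sharp estimate of the third step; once it is in hand, the counting and assembly are standard.
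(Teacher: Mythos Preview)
Your high-level reduction is correct and coincides with the paper's: expand the trace, use independence across the $k$ tensor factors to obtain the $k$-th power of the single-layer quantity $E_\alpha := \sum_{i\in[n]^p} E(i,\alpha)$, group by the canonical $\alpha$-sequence, and reduce the theorem to the two estimates $E_\alpha = n^{1-s}$ \emph{exactly} for $\alpha\in\cC_{s,p}^{(1)}$ and $|E_\alpha|\le C_p\,n^{-s}$ otherwise. You also correctly isolate the reason the regime $k\to\infty$ forces an exact identity rather than a leading-order estimate.

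The gap is that you do not prove the exact identity; the phrase ``all surviving Wick contributions collapse to products of $\bE[|\xi|^2]=1$'' is an assertion, not an argument, and it hides the entire combinatorial content of the paper. After expanding $E_\alpha$ over $i\in[n]^p$, the unit-modulus hypothesis gives $E(i,\alpha)=n^{-p}$ when the graph $g(i,\alpha)$ is paired and $E(i,\alpha)=0$ when it has a single edge, but in general there is a third type---graphs with coincident edges of the \emph{same} orientation (cf.\ Figure~\ref{Fig-3})---for which $E(i,\alpha)$ involves higher moments $\bE[\xi_1^a\bar{\xi}_1^{\,b}]$ with $a\ne b$ and is neither $0$ nor $n^{-p}$. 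The paper handles this in two non-trivial steps. First, Proposition~\ref{Prop-non crossing} shows that for non-crossing $\alpha$ the third type cannot occur: every $g(i,\alpha)$ is paired or single, so $E(i,\alpha)\in\{n^{-p},0\}$ and only $\bE[|\xi_1|^2]$ enters. Second, Proposition~\ref{Prop-number of paired graph} shows that the number of paired $i$-sequences with $|i|=r$ equals the Stirling number $S(p+1-s,r)$; combined with the polynomial identity $\sum_{r} S(N,r)\,n(n-1)\cdots(n-r+1)=n^{N}$ (Lemma~\ref{Lem-Stirling number}) at $N=p+1-s$, the full $i$-sum collapses to $n^{-p}\cdot n^{p+1-s}=n^{1-s}$ on the nose. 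Neither step is a routine Wick count, and your proposal supplies neither.

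For $\alpha\notin\cC_{s,p}^{(1)}$ your claimed bound $|E_\alpha|=O(n^{-s})$ is also true but again rests on a graph inequality you do not establish: by Proposition~\ref{Prop-number of vertices} and the absence of a $\Delta_1$-graph, every non-single $g(i,\alpha)$ has $r\le p-s$, whence $|E_\alpha|\le C_p\,n^{-s}$ and $n^{k(s-1)}|E_\alpha|^k\le (C_p/n)^k\to 0$. Your ``index-pairing constraints force an excess $n^{-1}$'' is the right heuristic, but the actual bound comes from this vertex-counting argument.
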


Our approach is based on the method of moments. We associate graphs to each terms of the moment, and compute the moment by distinguishing the graphs that contributes to the limit.
The class $\cC_{s,p}^{(1)}$ of sequences corresponds to the largest leading terms, while the sequences in $\alpha \notin \cC_{s,p}^{(1)}$ are negligible.
It's important to note that the tensor structure introduces a power of $k$ in the moment calculation.
For the case $k=1$, the limit of the moment sequence can be obtained by counting the size of $\cC_{s,p}^{(1)}$, since only the leading terms in the sum of $\alpha \in \cC_{s,p}^{(1)}$ contribute to the limit. We refer the interested readers to \cite{Bai2010}*{Section 3.1.3}. This is also true whenever $k=o(n)$. See \cite{BYY2022}*{Section 4}.
For the case $k=O(n)$ studied in \cite{BYY2022}, besides the first leading term, the second leading term also contributes to the limit.
In our current setting, where $k$ can grow much faster, all terms associated with $\alpha \in \cC_{s,p}^{(1)}$ may contribute to the limit. Hence, we need to characterize all the possible graphs associate with $\alpha \in \cC_{s,p}^{(1)}$. This is the main novelty in methodology of the present paper. 

In the next theorem, we strengthen Theorem \ref{Thm-main} from convergence in expectation to almost sure convergence.

\begin{theorem} \label{Thm-main2}
Assume that the conditions in Theorem \ref{Thm-main} hold. Suppose that $k = k(n)$ is a function of $n$ and tends to infinity as $n \to \infty$. Then for any fixed $p \in \bN_+$,
\begin{align*}
    \lim_{n \to +\infty} \dfrac{1}{n^k} \Tr M_{n,k,m}^p
   = \sum_{s=1}^p c^s \sum_{\alpha \in \cC_{s,p}^{(1)}} \left( \prod_{t=1}^s m_{\deg_t(\alpha)}^{(\tau)} \right),
\end{align*}
almost surely.
\end{theorem}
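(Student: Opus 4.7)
The strategy is the classical upgrade from convergence in expectation to almost sure convergence via a variance estimate combined with the Borel--Cantelli lemma. Set $X_n := n^{-k}\Tr M_{n,k,m}^p$, where $k=k(n)\to\infty$ and $m=m(n)$ satisfies \eqref{eq-def-ratio}. Theorem~\ref{Thm-main} gives $\bE[X_n]\to L$, where $L$ denotes the limit in the statement of Theorem~\ref{Thm-main2}. It therefore suffices to show $X_n-\bE[X_n]\to 0$ almost surely, and by Chebyshev's inequality together with Borel--Cantelli this reduces in turn to establishing a summable upper bound on $\Var(X_n)$ (or, as a fallback, on $\bE[|X_n-\bE[X_n]|^4]$).

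To estimate the variance I would replay the graph expansion driving the proof of Theorem~\ref{Thm-main}, but applied to the doubled quantity $\bE[(\Tr M_{n,k,m}^p)^2]$. Writing
\[
\Tr M_{n,k,m}^p=\sum_{\alpha\in[m]^p}\Bigl(\prod_{r=1}^{p}\tau_{\alpha_r}\Bigr)\sum_{i\in[n^k]^{p}}\prod_{r=1}^{p}(Y_{\alpha_r})_{i_{r-1}}\overline{(Y_{\alpha_r})_{i_r}}
\]
(with cyclic convention $i_p=i_0$) and squaring, one obtains a double sum over two column-index vectors $(\alpha,\alpha')$ and two closed vertex walks $(i,i')$. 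Because the family $\{Y_\alpha\}$ is independent across $\alpha$, the pairs for which the multisets $\{\alpha_1,\ldots,\alpha_p\}$ and $\{\alpha_1',\ldots,\alpha_p'\}$ are disjoint factorize and reproduce $(\bE[\Tr M_{n,k,m}^p])^2$ exactly. Hence $\Var(\Tr M_{n,k,m}^p)$ is the sum over pairs of closed walks sharing at least one common $\alpha$-label. Each forced identification removes one free index from $[m]\asymp n^k$, and re-running the vertex enumeration of Theorem~\ref{Thm-main} on the resulting joined graph shows that $\Var(\Tr M_{n,k,m}^p)$ is at most a factor $n^{-k}$ smaller than the square of the mean, so that $\Var(X_n)=O(n^{-k})$. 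Since $k(n)\to\infty$, this is eventually dominated by $n^{-2}$ and is therefore summable.

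The main obstacle is the combinatorial bookkeeping in the second step. The graph classes $\cC_{s,p}^{(1)}$ of Theorem~\ref{Thm-main} must be re-enumerated on the joined two-walk graph, and one has to verify that every extra identification between the two copies (beyond the minimal one forced by non-disjointness of the $\alpha$-labels) costs at least one additional factor of $n$, uniformly in $k$. The unit-modulus condition $|\xi_1|=1$ continues to produce the cancellations that keep the $k$-dependent powers of graph contributions bounded, exactly as in the proof of Theorem~\ref{Thm-main}. Should the variance estimate turn out to be only $o(1)$ rather than summable, I would replace it by the analogous estimate for $\bE[|X_n-\bE[X_n]|^4]$ on four joined copies of the walk; the extra cross-identifications then give strictly stronger decay, which is summable as soon as $k(n)\to\infty$.
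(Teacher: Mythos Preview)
Your proposal is correct and follows essentially the same route as the paper: expand the variance as a doubled graph sum, observe that pairs $(\alpha,\alpha')$ with disjoint label sets factorize and cancel against $(\bE[\Tr M_{n,k,m}^p])^2$, and bound the remaining connected terms via the joined-graph vertex count $|(\alpha,\alpha')|+|(i,i')|\le 2p+1$, which costs one free $[m]$-index and hence a factor $n^{-k}$. The only refinement worth noting is that the crude bound on the inner $(i,j)$-sum carries a combinatorial constant $C_p$ (from $\#\cC_{r,2p}$) which gets raised to the $k$-th power by the tensor structure, so the variance is really $O\!\bigl((C_p/n)^k\bigr)$ rather than $O(n^{-k})$; this is still summable once $n>C_p$ and $k(n)\ge 2$, so your Borel--Cantelli conclusion goes through and the fourth-moment fallback is unnecessary.
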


After establishing the limiting moment sequence, it is nature to inquire whether this sequence uniquely characterizes a probability measure. The following corollary provides a condition that guarantees the uniqueness of the probability measure corresponding to the moment sequence, which leads to the almost sure convergence of the ESD of $M_{n,k,m}$. 

\begin{corollary} \label{Coro}
Assume that the conditions in Theorem \ref{Thm-main} hold. Suppose that there exists a positive constant $A$, such that $|m_q^{(\tau)}| \le A^qq^q$ for all $q \in \bN$. Then there exists a probability measure $\mu$ whose moment sequence is
\begin{align} \label{eq-moment}
    \int_{\bR} x^p \mu(dx) = \sum_{s=1}^p c^s \sum_{\alpha \in \cC_{s,p}^{(1)}} \left( \prod_{t=1}^s m_{\deg_t(\alpha)}^{(\tau)} \right), \quad \forall p \in \bN_+.
\end{align}
Moreover, suppose that $k = k(n)$ is a function of $n$ and tends to infinity when $n \to \infty$, then the ESD of $M_{n,k,m}$ converges almost surely to $\mu$.

In particular, if $\tau_\alpha = 1$ for all $1 \le \alpha \le m$, then the ESD of $M_{n,k,m}$ converges almost surely to the Mar\v{c}enko-Pastur law \eqref{eq-MP law}.
\end{corollary}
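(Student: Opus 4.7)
The plan is to split the proof into three parts: (i) existence and uniqueness of a probability measure $\mu$ with the prescribed moments; (ii) upgrading the almost sure moment convergence of Theorem \ref{Thm-main2} to almost sure weak convergence of the ESD; and (iii) identifying $\mu$ with the Mar\v{c}enko--Pastur law in the case $\tau_\alpha \equiv 1$.

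For part (i), I would verify the hypotheses of the Hamburger moment problem. Writing the candidate moments as
\[
\mu_p := \sum_{s=1}^p c^s \sum_{\alpha \in \cC_{s,p}^{(1)}} \prod_{t=1}^s m_{\deg_t(\alpha)}^{(\tau)},
\]
I would first bound $|\cC_{s,p}^{(1)}|$ crudely (for instance by the Catalan number $C_p \le 4^p$, or even by the trivial count $p^p$ coming from interpreting elements of $\cC_{s,p}^{(1)}$ as length-$p$ sequences in $\{1,\ldots,s\}$). Next, since $\deg_t(\alpha) \ge 1$ and $\sum_{t=1}^s \deg_t(\alpha) = p$, the hypothesis $|m_q^{(\tau)}| \le A^q q^q$ yields $\bigl|\prod_{t=1}^s m_{\deg_t(\alpha)}^{(\tau)}\bigr| \le A^p p^p$. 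Combining these gives $|\mu_p| \le K^p p^p$ for a constant $K$ depending only on $A$ and $c$, which implies Carleman's condition $\sum_p \mu_{2p}^{-1/(2p)} = \infty$ and hence determinacy of the moment problem. Existence comes for free from Theorem \ref{Thm-main2}: $\mu_p$ is the almost sure limit of $\int x^p\,d\mu_{n,k,m}$ for the (nonnegative) empirical spectral measures $\mu_{n,k,m}$ of $M_{n,k,m}$, so the Hankel matrices of $\{\mu_p\}$ are limits of positive semidefinite Hankel matrices and therefore positive semidefinite. Hamburger's theorem then produces a unique probability measure $\mu$ on $\bR$ realising $\{\mu_p\}$.

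For part (ii), I would fix a single probability-one event (obtained by countable intersection over $p$) on which the conclusion of Theorem \ref{Thm-main2} holds simultaneously for every $p \in \bN_+$. On this event, boundedness of the second moment $\mu_2$ gives tightness of $\{\mu_{n,k,m}\}$, and convergence of the $(p+1)$-st moment supplies the uniform integrability needed to transfer the moment to any subsequential weak limit. Uniqueness from part (i) then forces every subsequential weak limit to equal $\mu$, so the full sequence converges weakly to $\mu$ almost surely. Combined with Theorem \ref{Thm-main2}, this yields the first displayed claim of the corollary.

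For part (iii), when $\tau_\alpha \equiv 1$ one has $m_q^{(\tau)} \equiv 1$, so the growth hypothesis is trivially satisfied and $\mu_p = \sum_{s=1}^p c^s |\cC_{s,p}^{(1)}|$ is a purely combinatorial quantity independent of $n$ and $k$. To identify this with the $p$-th Mar\v{c}enko--Pastur moment, I would exploit a regime where both Theorem \ref{Thm-main2} and \cite{BYY2022}*{Theorem 2.1} apply: selecting any sequence $k=k(n) \to \infty$ with $k/n \to 0$, both results compute the limiting moment of the ESD, and the latter (corresponding to $d=0$) identifies it with the moment of the Mar\v{c}enko--Pastur law \eqref{eq-MP law}. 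Independence of $\mu_p$ from $(n,k)$ promotes this to every $p$, and part (ii) then gives the almost sure weak convergence of the ESD to \eqref{eq-MP law}. The main obstacle I anticipate is the clean combinatorial bound on $|\cC_{s,p}^{(1)}|$ that closes the Carleman estimate; once that is in hand, the remaining steps are standard applications of the method of moments.
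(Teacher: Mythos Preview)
Your proposal is correct, and parts (i) and (ii) match the paper's route: bound the $p$-th moment by $K^p p^p$ via $\sum_t \deg_t(\alpha)=p$ and a count of $\cC_{s,p}^{(1)}$, verify Carleman, then invoke the standard method-of-moments upgrade from Theorem \ref{Thm-main2}. Two remarks here. First, the ``main obstacle'' you flag is already handled in the paper: Lemma \ref{lem-Bai} gives the exact count $\#\cC_{s,p}^{(1)}=\tfrac{1}{p}\binom{p}{s-1}\binom{p}{s}\le 4^p$, so no ad hoc bound is needed. Second, you are more careful than the paper on the existence side of the moment problem; the paper simply asserts existence and uniqueness from Carleman, whereas you (correctly) supply existence via the Hamburger criterion and positive semidefiniteness inherited from the ESDs.

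Part (iii) is where you genuinely diverge. The paper identifies $\mu$ directly: with $m_q^{(\tau)}\equiv 1$ it uses Lemma \ref{lem-Bai} to write $\mu_p=\sum_{s=1}^p \tfrac{c^s}{p}\binom{p}{s-1}\binom{p}{s}$ and then cites \cite{Bai2010}*{Lemma 3.1} to recognise this as the Mar\v{c}enko--Pastur moment sequence. Your argument instead picks a regime $k(n)\to\infty$, $k/n\to 0$ where both Theorem \ref{Thm-main2} and \cite{BYY2022}*{Theorem 2.1} (with $d=0$) apply, and reads off the identification from the latter. This works, but it trades a short self-contained computation for dependence on an external theorem; the paper's route is more economical once Lemma \ref{lem-Bai} is in hand.
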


By combining the results in \cite{Lytova2018}*{Theorem 1.2}, \cite{BYY2022}*{Theorem 2.1} and Corollary \ref{Coro}, we immediately obtain the following necessary and sufficient condition for Mar\v{c}enko-Pastur law \eqref{eq-MP law} to be the LSD of the model \eqref{eq:matrix} when $\tau_\alpha \equiv 1$.  

\begin{corollary} \label{Coro'}
Let $M_{n,k,m}$ be in \eqref{eq:matrix} with $\tau_\alpha = 1$ for all $1 \le \alpha \le m$. Then the ESD of $M_{n,k,m}$ converges almost surely to the Mar\v{c}enko-Pastur law \eqref{eq-MP law} if and only if either $k=o(n)$ or $|\xi_1|=1$.
\end{corollary}

The rest of the paper is organized as follow. We develop the theory of graph combinatories in Section \ref{sec:graph}. We first introduce some results from literature on the graph combinatorics in Subsection \ref{sec-preliminary}. Then we characterize the set $\cC_{s,p}^{(1)}$, which corresponding to the leading term in the moment computation. The paired graph, which is the graph that contributes to the limit for $\alpha \in \cC_{s,p}^{(1)}$, is studied in Section \ref{sec:paired garph}. In Section \ref{sec:moment}, we prove the main theorems. The proofs of Theorem \ref{Thm-main}, Theorem \ref{Thm-main2} and Corollary \ref{Coro} are presented in Section \ref{sec:(a)}, Section \ref{sec:(b)} and Section \ref{sec:(c)}, respectively.

\section{Graph combinatorics} \label{sec:graph}

In this section, we study the graph combinatorics, which will be used in Section \ref{sec:moment}.

\subsection{Preliminaries} \label{sec-preliminary}

In this subsection, we introduce some preliminaries on graph combinatorics, which can be found in \cites{Bai2010, BYY2022}.

For a positive integer $s$, we denote by $[s]$ the set of integers from 1 to $s$. We call $\alpha=(\alpha_1, \ldots, \alpha_p) \in [m]^p$ a sequence of length $p$ with vertices $\alpha_j$ for $1 \le j \le p$. We denote by $|\alpha|$ the number of distinct elements in $\alpha$. If $s = |\alpha|$, then we call $\alpha$ an \emph{$s$-sequence}. Let  $\cJ_{s,p}(m)$ be the set of all $s$-sequences $\alpha \in [m]^p$. Then
\begin{align} \label{eq-0.3}
	[m]^p = \bigcup_{s=1}^p \cJ_{s,p}(m).
\end{align}
For a sequence $\alpha = (\alpha_1, \ldots, \alpha_p)$ and each value $t$ in $\alpha$, we count its frequency by
\begin{align}\label{eq:deg_t}
  \deg_t(\alpha) = \# \{j \in [p]: \alpha_j = t \}.
\end{align}
where we use the notation $\#S$ for the number of elements in the set $S$.

Two sequences are \emph{equivalent} if one becomes the other by a suitable permutation on $[m]$.
The sequence $\alpha$ is \emph{canonical} if $\alpha_1=1$ and $\alpha_u \le \max\{\alpha_1, \ldots, \alpha_{u-1}\} + 1$ for $u \ge 2$. We denote by $\cC_{s,p}$ the set of all canonical $s$-sequences of length $p$.
From the definition above, one can see that the set of distinct vertices of a canonical $s$-sequence is $[s]$. Denote by $\cI_{s,m}$ the set of injective maps from $[s]$ to $[m]$. For a canonical $s$-sequence $\alpha$ and a map $\varphi \in \cI_{s,m}$, we call $\varphi(\alpha)$ the $s$-sequence $(\varphi(\alpha_1), \ldots, \varphi(\alpha_p))$. For each canonical $s$-sequence, its image under the maps in $\cI_{s,m}$ gives all its equivalent sequences, and hence its equivalent class of sequences in $[m]^p$ has exactly $m(m-1) \cdots (m-s+1)$ distinct elements.

We fixed a canonical $s$-sequence $\alpha = (\alpha_1, \ldots, \alpha_p) \in [m]^p$. For $i = (i^{(1)}, \ldots, i^{(p)}) \in [n]^p$, draw two parallel lines referred as the $\alpha$-line and the $i$-line, respectively. Plot $i^{(1)}, \ldots, i^{(p)}$ on the $i$-line and $\alpha_1, \ldots, \alpha_p$ on the $\alpha$-line. Draw $p$ down edges from $\alpha_u$ to $i^{(u)}$ and $p$ up edges from $i^{(u)}$ to $\alpha_{u+1}$ for $1 \le u \le p$ with the convention that $\alpha_1 = \alpha_{p+1}$. We denote the graph by $g(i,\alpha)$ and call such graph a $\Delta(p;\alpha)$-graph. From the definition, one can easily see that the graph $g(i,\alpha)$ is a connected directed graph with up edges and down edges appear alternatively. An example of the $\Delta(p;\alpha)$-graph is given in (a) of the Figure \ref{Fig-1}.

Two graphs $g(i,\alpha)$ and $g(i',\alpha)$ are called \emph{equivalent} if the two sequences $i$ and $i'$ are equivalent, and we write $g(i,\alpha) \sim g(i',\alpha)$ for this equivalence.
For each equivalent class, we choose the canonical graph such that $i = (i^{(1)}, \ldots, i^{(p)}) \in [n]^p$ is a canonical $r$-sequence for some $r \in \bN_+$. A canonical $\Delta(p;\alpha)$-graph is denoted by $\Delta(p,r,s;\alpha)$ if it has $r$ noncoincident $i$-vertices and $s$ noncoincident $\alpha$-vertices.

We call a graph $\Delta_1(p,s;\alpha)$-graph if it is a $\Delta(p;\alpha)$-graphs such that each down edge coincide with exactly one up edge and if we glue the pair of coincident edges and remove the orientation, the resulting graph is a tree with $p$ edges and $p+1$ vertices. Hence, we have $r+s=p+1$. We give an example of $\Delta_1(p,s;\alpha)$-graph in (b) of Figure \ref{Fig-1}.

\begin{figure}[h]
\centering
\subfigure[$\Delta(p,\alpha)$-graph with $p=3$, $\alpha=(1,2,2)$, $i=(1,2,3)$.]{
\begin{minipage}{7cm}
\centering
\includegraphics[scale=0.35]{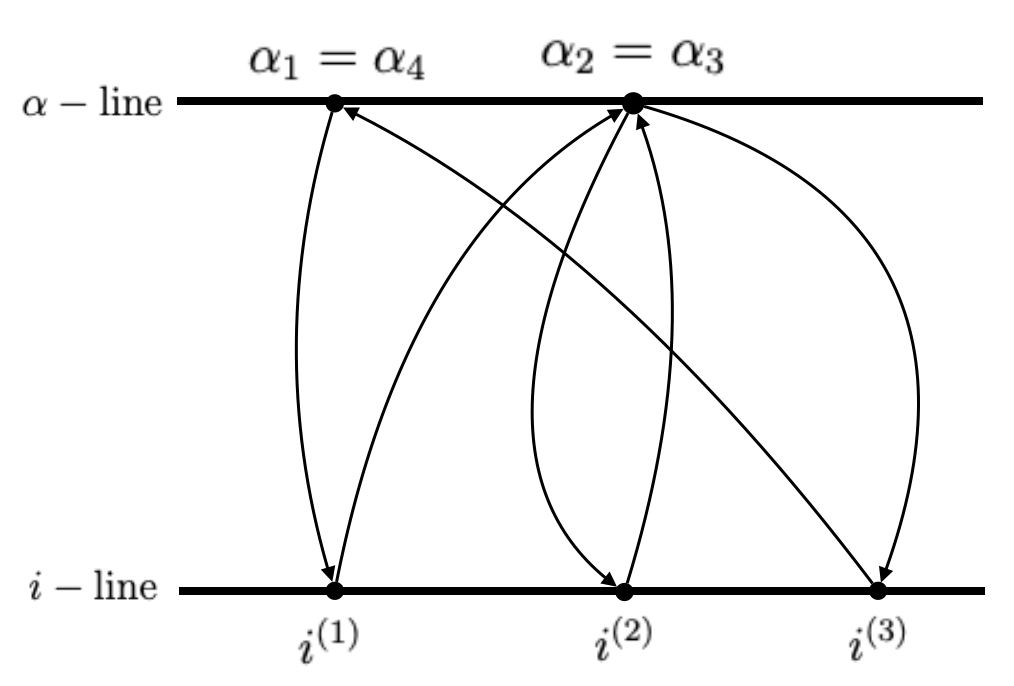}
\end{minipage}
}
\hspace{0.5cm}
\subfigure[$\Delta_1(p,s;\alpha)$-graph with $p=3, s=2$, $\alpha=(1,2,2)$, $i=(1,2,1)$.]{
\begin{minipage}{7cm}
\centering
\includegraphics[scale=0.35]{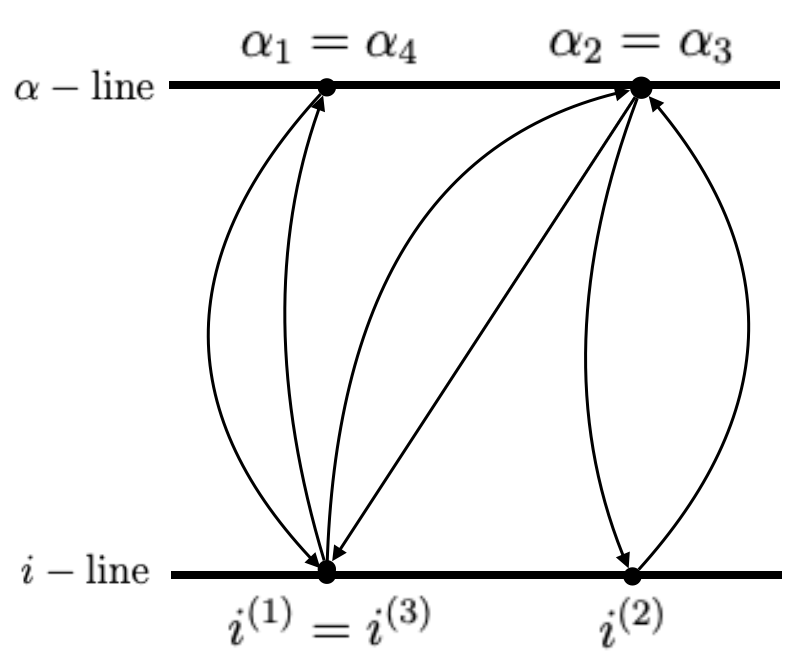}
\end{minipage}
}
\caption{}
\label{Fig-1}
\end{figure}

For a given sequence $\alpha \in \cC_{s,p}$, the following lemma determines the number of sequences $i \in \cC_{p+1-s,p}$ such that $g(i,\alpha) \in \Delta_1(p,s;\alpha)$.

\begin{lemma} {(\cite{BYY2022}*{Lemma 3.1})} \label{lem-Bai}
For any $1 \le s \le p$ and any sequence $\alpha \in \cC_{s,p}$, there is at most one sequence $i \in \cC_{p+1-s,p}$ such that $g(i,\alpha) \in \Delta_1(p,s;\alpha)$. We denote by $\cC_{s,p}^{(1)}$ the set of such canonical sequences $\alpha$. Then the number of the elements in $\cC_{s,p}^{(1)}$ is
\begin{align*}
	\dfrac{1}{p} \binom{p}{s-1} \binom{p}{s}.
\end{align*}
\end{lemma}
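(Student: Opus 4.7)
The plan is to prove the lemma in three stages: first, uniqueness of $i$ given $\alpha$; second, a bijective identification of $\cC_{s,p}^{(1)}$ with a classical combinatorial family; and third, invoking a known enumeration.

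For the uniqueness, fix $\alpha\in\cC_{s,p}$ and assume $g(i,\alpha)\in\Delta_1(p,s;\alpha)$. The closed walk $\alpha_1, i^{(1)}, \alpha_2, i^{(2)},\ldots,\alpha_p, i^{(p)}, \alpha_1$ traces each of the $p$ edges of the glued tree $T$ exactly twice, so it is an Eulerian tour coming from a depth-first traversal of $T$ rooted at $\alpha_1$. I would induct on $u$: after $i^{(1)},\ldots,i^{(u-1)}$ have been fixed, the partial subtree spanned so far is determined, and the requirement that $T$ be a tree forces $i^{(u)}$. Explicitly, if $\alpha_{u+1}$ has occurred earlier in $\alpha$ (so it is already a vertex of the partial tree), then $i^{(u)}$ is the unique $i$-vertex adjacent to both $\alpha_u$ and $\alpha_{u+1}$ on the unique $T$-path between them; if $\alpha_{u+1}$ is fresh, then $i^{(u)}$ is the next new $i$-vertex introduced by the DFS exploration, which is pinned down by the canonical labeling of $i$. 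Either the construction is consistent and yields a unique $i$, or it is obstructed, which gives the first half of the lemma.

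For the bijection and counting, I identify $\cC_{s,p}^{(1)}$ with the set of rooted plane trees on $p$ edges having exactly $s$ vertices at even depth from the root. Given $\alpha\in\cC_{s,p}^{(1)}$ together with its unique $i$, the tree $T$ becomes a plane tree by taking, at each vertex, the cyclic order of incident edges to be the order in which they are first traversed by the Eulerian tour. Since edges of $T$ always join an $\alpha$-vertex to an $i$-vertex, $T$ is bipartite with the $\alpha$-vertices precisely at even depth, of which there are $s$. Conversely, any rooted plane tree with this parameter admits a unique DFS tour, and recording the $\alpha$- and $i$-vertices in order of first appearance returns a canonical $\alpha$ with the correct companion $i$. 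A classical enumeration shows that the number of rooted plane trees on $p$ edges with $s$ vertices at even depth equals the Narayana number $\frac{1}{p}\binom{p}{s-1}\binom{p}{s}$; this is equivalent, via standard bijections, to counting Dyck paths of length $2p$ with $s$ peaks, or noncrossing partitions of $[p]$ into $s$ blocks.

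The main obstacle will be making the inductive uniqueness argument airtight: one must show that the forced construction is globally consistent (i.e.\ produces a bona fide tree rather than a cycle or a disconnected graph) for exactly those $\alpha$ corresponding to a plane tree under the bijection above. This amounts to isolating a noncrossing condition on $\alpha$; alternating patterns like $(1,2,1,2,\ldots)$ are excluded because forcing each $i^{(u)}$ to be adjacent to both repeated $\alpha$-vertices would create a cycle. Once this noncrossing structure has been cleanly characterised, the second half of the lemma follows from standard plane-tree combinatorics.
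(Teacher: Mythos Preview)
The paper does not prove this lemma: it is quoted from \cite{BYY2022}*{Lemma 3.1} in the preliminaries section and no argument is supplied, so there is no in-paper proof to compare your proposal against.

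Your overall strategy is nonetheless sound and in fact anticipates the paper's own later development. The noncrossing obstruction you isolate at the end is exactly Theorem~\ref{Thm-non-crossing} (proved via Lemmas~\ref{Lem-crossing} and~\ref{Lem-non crossing}), which identifies $\cC_{s,p}^{(1)}$ with the non-crossing canonical $s$-sequences of length $p$; from that characterisation the Narayana count is immediate through the standard bijection with noncrossing partitions of $[p]$ into $s$ blocks. Your equivalent packaging via rooted plane trees on $p$ edges with $s$ vertices at even depth is also correct and yields the same number.

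One place your sketch needs tightening, beyond what you already flag: the two-case uniqueness argument does not actually pin down $i^{(u)}$. In the case ``$\alpha_{u+1}$ fresh'' you assert that $i^{(u)}$ is the next new $i$-vertex, but in a DFS tour one may backtrack from $\alpha_u$ to an \emph{old} $i$-vertex (its parent in the tree) and then advance to a fresh $\alpha_{u+1}$ hanging off that $i$-vertex. The value of $i^{(u)}$ is determined by the current DFS stack state, not solely by whether $\alpha_{u+1}$ is old or new. With the inductive statement reformulated in those terms the uniqueness argument goes through, and the existence side is precisely the noncrossing characterisation.
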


\subsection{Characterization of $\cC_{s,p}^{(1)}$}

In this subsection, we establish some properties of the sequences in $\cC_{s,p}^{(1)}$. We start with the following definition.

\begin{definition}
A sequence $\alpha=(\alpha_1, \ldots, \alpha_p)$ is called a \emph{crossing} sequence if there exist $j_1<j_2<j_3<j_4$, such that $\alpha_{j_1} = \alpha_{j_3} \neq \alpha_{j_2} = \alpha_{j_4}$. A sequence is called \emph{non-crossing} sequence if it is not a crossing sequence.
\end{definition}

The following theorem is a characterization of the set $\cC_{s,p}^{(1)}$.

\begin{theorem} \label{Thm-non-crossing}
For any $1 \le s \le p$, the set of all non-crossing sequences $\alpha \in \cC_{s,p}$ is $\cC_{s,p}^{(1)}$.
\end{theorem}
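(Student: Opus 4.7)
The plan is to combine a structural argument with a counting argument based on Lemma~\ref{lem-Bai}. Specifically, I would show the inclusion $\cC_{s,p}^{(1)} \subseteq \{\alpha \in \cC_{s,p}: \alpha \text{ is non-crossing}\}$, and then argue equality by comparing cardinalities.

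For the inclusion, fix $\alpha \in \cC_{s,p}^{(1)}$ and pick $i \in \cC_{p+1-s,p}$ with $g(i,\alpha) \in \Delta_1(p,s;\alpha)$. After gluing each pair of coincident edges and dropping orientation, the graph becomes a tree $T$ on $p+1$ vertices. The cyclic directed walk $\alpha_1 \to i^{(1)} \to \alpha_2 \to \cdots \to i^{(p)} \to \alpha_1$ in $g(i,\alpha)$ traverses each edge of $T$ exactly once in each direction and is therefore an Euler tour of $T$. Suppose for contradiction $\alpha$ is crossing, with $j_1 < j_2 < j_3 < j_4$ satisfying $\alpha_{j_1} = \alpha_{j_3} = a$, $\alpha_{j_2} = \alpha_{j_4} = b$, and $a \neq b$. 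Let $T_b$ denote the connected component of $T \setminus \{a\}$ containing $b$; the unique edge joining $a$ to $T_b$ is traversed exactly once in each direction by the Euler tour, so the walk enters $T_b$ exactly once and exits exactly once. Consequently, the set of times at which the walk sits inside $T_b$ is a single contiguous interval bounded by two consecutive visits to $a$. Since $j_2, j_4$ lie in this interval, so does $j_3$; but inside this interval the walk stays in $T_b$ and never returns to $a$, contradicting $\alpha_{j_3} = a$.

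For the cardinality step, canonical $s$-sequences of length $p$ are in natural bijection with set partitions of $[p]$ into $s$ blocks (the block of position $u$ being $\alpha_u$), and the crossing condition on a sequence translates verbatim into the crossing condition on the associated partition. The number of non-crossing partitions of $[p]$ into $s$ blocks is the classical Narayana number $\frac{1}{p}\binom{p}{s-1}\binom{p}{s}$, which by Lemma~\ref{lem-Bai} equals $|\cC_{s,p}^{(1)}|$. Combined with the previous paragraph, the inclusion is an equality.

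The main obstacle is making the Euler-tour picture in the first step precise, in particular verifying that each subtree component of $T \setminus \{a\}$ corresponds to a single excursion of the walk because the only edge joining that subtree to $a$ is used exactly twice (once in each direction). Once this contiguity is pinned down, the crossing pattern $j_1 < j_2 < j_3 < j_4$ would force the walk to leave $T_b$ at time $j_3$ and then re-enter it at time $j_4$, which is impossible with only one excursion available. The remaining cardinality step is the standard Narayana-number identification of non-crossing partitions and is routine.
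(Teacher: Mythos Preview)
Your proposal is correct. For the inclusion $\cC_{s,p}^{(1)}\subseteq\{\alpha\in\cC_{s,p}:\alpha\text{ non-crossing}\}$, you and the paper run in parallel: the paper records this direction as Lemma~\ref{Lem-crossing}, citing \cite{BYY2022} without argument, while you spell out an Euler-tour proof on the tree $T$. One small refinement: the closed walk is cyclic, so the excursion into $T_b$ is a contiguous \emph{cyclic} arc; such an arc containing both $j_2$ and $j_4$ must contain $j_3$ or $j_1$, and either yields the contradiction. Your final paragraph essentially says this.

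The genuine divergence is in the reverse inclusion. The paper proves it constructively as Lemma~\ref{Lem-non crossing}: by induction on $p$ and a case analysis on the first repetition in $\alpha$, it builds an explicit $i$ with $g(i,\alpha)\in\Delta_1(p,s;\alpha)$ for every non-crossing $\alpha$. You instead count: canonical non-crossing $s$-sequences of length $p$ are exactly non-crossing partitions of $[p]$ into $s$ blocks, enumerated by the Narayana number $\tfrac{1}{p}\binom{p}{s-1}\binom{p}{s}$, which equals $|\cC_{s,p}^{(1)}|$ by Lemma~\ref{lem-Bai}. Your route is shorter and sidesteps the case splits, at the cost of importing the Narayana enumeration as a black box; the paper's route is self-contained and produces an explicit witness $i$, though that witness is not reused later (subsequent arguments invoke only the existence and uniqueness supplied by Lemma~\ref{lem-Bai}).
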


The proof of Theorem \ref{Thm-non-crossing} follows directly from the two lemmas below.

\begin{lemma} {(\cite{BYY2022}*{Lemma 3.4})} \label{Lem-crossing}
For any $1 \le s \le p$ and any sequence $\alpha \in \cC_{s,p}$, if $\alpha$ is a crossing sequence, then $\alpha \notin \cC_{s,p}^{(1)}$.
\end{lemma}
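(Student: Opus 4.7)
The plan is a proof by contradiction. Suppose $\alpha \in \cC_{s,p}$ is a crossing sequence, with indices $j_1 < j_2 < j_3 < j_4$ satisfying $\alpha_{j_1} = \alpha_{j_3} = a$ and $\alpha_{j_2} = \alpha_{j_4} = b$ for some $a \neq b$, and assume toward a contradiction that $\alpha \in \cC_{s,p}^{(1)}$. By definition, there exists a canonical sequence $i \in \cC_{p+1-s,p}$ with $g(i,\alpha) \in \Delta_1(p,s;\alpha)$; gluing each down edge to its unique coincident up edge and forgetting orientations produces an undirected tree $T$ on $p+1$ vertices with $p$ edges.

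The structural fact I plan to exploit is that the $\Delta(p;\alpha)$-graph realizes the closed walk
\[
\alpha_1 \to i^{(1)} \to \alpha_2 \to i^{(2)} \to \cdots \to \alpha_p \to i^{(p)} \to \alpha_1
\]
on $T$, in which every edge of $T$ is traversed exactly twice, once in each direction. The key auxiliary statement to establish is the following non-crossing property on the tree walk: for any vertex $v$ of $T$ and any two visit times $\tau < \tau'$ of the walk to $v$, every vertex $u$ visited at some time in $(\tau, \tau')$ has \emph{all} of its visit times lying in $(\tau, \tau')$.

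Granting this, the lemma is immediate. Apply the non-crossing property with $v = a$ and with $\tau$, $\tau'$ the walk-positions of $\alpha_{j_1}$ and $\alpha_{j_3}$; since $b$ is visited at the walk-position of $\alpha_{j_2} \in (\tau, \tau')$, every visit to $b$ must lie in $(\tau, \tau')$. In particular the visit of $b$ at the walk-position of $\alpha_{j_4}$ must precede the visit of $a$ at the walk-position of $\alpha_{j_3}$, i.e.\ $j_4 < j_3$, contradicting $j_3 < j_4$. Hence $\alpha \notin \cC_{s,p}^{(1)}$.

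The main obstacle, in my view, is a clean proof of the non-crossing property — this is the only step where the tree structure of $T$ is used. My intended argument: let $e$ be the first edge on the unique $T$-path from $v$ to $u$, and let $C_v$ be the component of $T \setminus \{e\}$ containing $v$. The sub-walk from $\tau$ to $\tau'$ is a closed walk at $v$, hence crosses every edge of $T$ an even number of times; since each edge of $T$ appears only twice in the full walk, each edge is crossed either $0$ or $2$ times by the sub-walk. Because the sub-walk reaches $u \notin C_v$, it must cross $e$, and therefore crosses $e$ exactly twice, leaving the full walk to cross $e$ zero times outside $[\tau, \tau']$. Consequently the walk remains in $C_v$ throughout $[0, \tau] \cup [\tau', 2p]$, and so it cannot visit $u$ outside $(\tau, \tau')$. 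An equally clean alternative route is a short induction on $p$ obtained by peeling a leaf of $T$: any leaf $\ell$ with unique neighbour $v'$ appears in the walk only in two-step excursions $\cdots v' \to \ell \to v' \cdots$, which can be contracted to produce a walk on $T \setminus \{\ell\}$ enjoying the same properties, from which the statement descends.
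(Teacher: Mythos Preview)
The paper does not prove this lemma; it simply imports it as \cite{BYY2022}*{Lemma 3.4}. So there is no in-paper argument to compare against, and your proposal stands on its own.

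Your argument is correct. Viewing the $\Delta_1(p,s;\alpha)$-graph as a closed contour walk of length $2p$ on a tree $T$, using each edge once in each direction, is exactly the right picture, and the edge-cut argument for the ``non-crossing'' property is clean: a closed sub-walk in a tree crosses every edge an even number of times, each tree edge has total multiplicity two in the full walk, and hence any edge separating $v$ from $u$ is consumed entirely within $[\tau,\tau']$, trapping $u$ inside that interval. The application to $v=a$, $u=b$ then yields $j_4<j_3$, the desired contradiction.

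One small imprecision: your auxiliary claim as stated is false when $u=v$ (take a star with center $c$ visited at times $0,2,4,6$; between times $0$ and $4$ the vertex $c$ itself is visited, yet $c$ also appears at time $6$). Your proof of the claim already tacitly assumes $u\neq v$, since it invokes ``the first edge on the unique $T$-path from $v$ to $u$'', and your application has $u=b\neq a=v$, so nothing is actually wrong---just add the hypothesis $u\neq v$ to the auxiliary statement.
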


\begin{lemma} \label{Lem-non crossing}
For any $1 \le s \le p$ and any sequence $\alpha \in \cC_{s,p}$, if $\alpha$ is a non-crossing sequence, then $\alpha \in \cC_{s,p}^{(1)}$.
\end{lemma}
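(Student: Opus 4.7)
The plan is to deduce Lemma \ref{Lem-non crossing} from Lemma \ref{Lem-crossing} by a cardinality argument, which sidesteps any explicit construction of the $i$-sequence. By Lemma \ref{Lem-crossing}, every $\alpha \in \cC_{s,p}^{(1)}$ is non-crossing, so we already have the inclusion $\cC_{s,p}^{(1)} \subseteq \{\alpha \in \cC_{s,p} : \alpha \text{ is non-crossing}\}$, while Lemma \ref{lem-Bai} gives $|\cC_{s,p}^{(1)}| = \frac{1}{p}\binom{p}{s-1}\binom{p}{s}$. Hence it suffices to show that the number of non-crossing canonical $s$-sequences in $\cC_{s,p}$ equals the same quantity.

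The key step is a natural bijection between $\cC_{s,p}$ and the collection of (unordered) set partitions of $[p]$ with exactly $s$ blocks: send $\alpha$ to the partition whose blocks are the level sets $\{j \in [p] : \alpha_j = t\}$, $t = 1, \ldots, s$. The canonical requirement on $\alpha$ corresponds precisely to labeling the blocks in the order of their smallest elements, so this map is a bijection. Moreover, the crossing condition in the definition above is verbatim the standard crossing condition for set partitions, so the bijection restricts to one between non-crossing $\alpha \in \cC_{s,p}$ and non-crossing partitions of $[p]$ with exactly $s$ blocks. Since the latter are enumerated by the Narayana number $\frac{1}{p}\binom{p}{s-1}\binom{p}{s}$ (Kreweras), the two cardinalities match and Lemma \ref{Lem-non crossing} follows.

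The main (and essentially only) ingredient external to the paper is the Narayana enumeration of non-crossing set partitions, which is classical and can be either cited or reproved in a few lines using the standard Narayana recurrence (conditioning on the block containing $1$). If a fully self-contained, constructive proof is preferred instead, an alternative route would be induction on $p$: use the fact that every non-crossing partition has at least one block that is a contiguous interval in $[p]$ to locate either an index $j$ with $\alpha_j = \alpha_{j+1}$ (the contiguous block has size $\ge 2$) or a singleton block; peel off the corresponding position to obtain a shorter non-crossing sequence in $\cC_{s,p-1}$ or $\cC_{s-1,p-1}$; apply the inductive hypothesis to produce an $i'$-sequence realizing the $\Delta_1$ structure; and reinsert the removed position by attaching a pendant $i$-vertex so that the resulting graph is still a doubled tree. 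The extra bookkeeping required to keep the extended $i$-sequence canonical makes this constructive variant noticeably more involved than the cardinality argument, which is why I would present the latter.
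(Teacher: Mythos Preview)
Your cardinality argument is correct and takes a genuinely different route from the paper. The paper proves Lemma \ref{Lem-non crossing} by an explicit inductive construction: given a non-crossing $\alpha$, it splits $\alpha$ into shorter non-crossing pieces (according to the first repetition of $\alpha_1$, or of $\alpha_2$ if $\alpha_1$ is a singleton), builds the $\Delta_1$-witnesses for the pieces by the induction hypothesis, and then glues the corresponding $i$-sequences back together, checking that the resulting $g(i,\alpha)$ is a doubled tree. Your proof instead observes that Lemma \ref{Lem-crossing} already gives the inclusion $\cC_{s,p}^{(1)} \subseteq \{\alpha \in \cC_{s,p} : \alpha \text{ non-crossing}\}$, and then matches the cardinalities by identifying canonical $s$-sequences with set partitions of $[p]$ into $s$ blocks and invoking the Kreweras--Narayana count $\frac{1}{p}\binom{p}{s-1}\binom{p}{s}$, which equals $|\cC_{s,p}^{(1)}|$ by Lemma \ref{lem-Bai}. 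Your approach is markedly shorter and avoids the bookkeeping of the gluing construction, at the price of importing the Narayana enumeration of non-crossing partitions; the paper's constructive proof is fully self-contained and, incidentally, exhibits the unique $i$-sequence directly (though the later arguments in the paper only need its existence, which Lemma \ref{lem-Bai} already guarantees once $\alpha \in \cC_{s,p}^{(1)}$). The inductive alternative you sketch in your final paragraph is close in spirit to the paper's own proof.
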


%To be self-contained, we include the proof of Lemma \ref{Lem-crossing} first, and then present the proof of Lemma \ref{Lem-non crossing}.

%\begin{proof} (of Lemma \ref{Lem-crossing})
%We prove by contradiction. Suppose that $\alpha=(\alpha_1, \ldots, \alpha_p) \in \cC_{s,p}^{(1)}$, then there exists a a sequence $i \in \cC_{p+1-s,p}$, such that $g(i,\alpha) \in \Delta_1(p,s;\alpha)$. Noting that in the graph $g(i,\alpha)$, there are two directed paths $P_1,P_3$ from $\alpha_{j_1}$ to $\alpha_{j_2}$, where $P_1$ is the path $\alpha_{j_1} \to i^{(j_1)} \to \alpha_{j_1+1} \to \ldots \to \alpha_{j_2}$, and $P_3$ is the path $\alpha_{j_3} \to i^{(j_3)} \to \alpha_{j_3+1} \to \ldots \to \alpha_{j_4}$. From the definition of $\Delta_1(p,s;\alpha)$-graph, any two down edges (resp. up edges) do not coincide. Hence, there is no common edge in the two paths $P_1, P_3$. Thus, after gluing the pairs of coincident up edge and down edge and removing orientation, the resulting graph has two different paths $P_1',P_3'$ from $\alpha_{j_1}$ to $\alpha_{j_2}$, where $P_1'$ (resp. $P_3'$) is the path obtained from $P_1$ (resp. $P_3$) by removing the orientation. However, the two different paths $P_1',P_3'$ form a cycle, which contradict to the definition of $\Delta_1(p,s;\alpha)$-graph that the resulting graph is a tree.
%\end{proof}

\begin{proof} (of Lemma \ref{Lem-non crossing})
We prove by induction on $p$. The case $p=1$ is trivial, since $\alpha = (\alpha_1)$ and $i=(i^{(1)})$, so $g(i,\alpha)$ is the graph with exactly one up edge from $i^{(1)}$ to $\alpha_1$ and one down edge from $\alpha_1$ to $i^{(1)}$.

Assume that Lemma \ref{Lem-non crossing} holds for sequence of length at most $p-1$ for $p \ge 2$. We need to show Lemma \ref{Lem-non crossing} holds for any non-crossing sequence $\alpha=(\alpha_1, \ldots, \alpha_p) \in \cC_{s,p}$. We consider the following two cases according to whether $\alpha_1$ coincide with other vertices.

\noindent{\bf Case 1.} There exists $1<j<p+1$, such that $\alpha_j = \alpha_1$.

In this case, we split the sequence $\alpha$ to two subsequences $\alpha'=(\alpha_1,\ldots,\alpha_{j-1})$ and $\alpha''=(\alpha_j,\ldots,\alpha_p)$. For the subsequence $\alpha'$, it is canonical $s'$-sequence for some $s'<s$. Moreover, $\alpha'$ is non-crossing and has length $j-1 \le p-1$, so by induction hypothesis, we have $\alpha' \in \cC_{s',j-1}^{(1)}$, and thus, there exists a canonical $(j-s')$-sequence $i'$ of length $j-1$, such that $g(i',\alpha') \in \Delta_1(j-1,s';\alpha')$.

For the subsequence $\alpha''$, it is also non-crossing but not canonical. The non-crossing property allows us to identify $\alpha''$ to a canonical sequence. Note that the vertices of $\alpha'$ take values in $[s']$, so the vertices of $\alpha''$ take values in $\{1\} \cup \{s'+1, \ldots,s\}$. We define $\beta''=(\beta_j,\ldots,\beta_p)$ by setting $\beta_k=\alpha_k$ if $\alpha_k=\alpha_1$, and $\beta_k=\alpha_k-s'+1$ if $\alpha_k\not=\alpha_1$. Now $\beta''$ is canonical non-crossing $(s-s'+1)$-sequence of length $p-j+1 \le p-1$. Hence, by induction hypothesis, we have $\beta'' \in \cC_{s-s'+1,p-j+1}^{(1)}$ and there exists a canonical $(p-j-s+s'+1)$-sequence $i''$ of length $p-j+1$, such that $g(i'',\beta'') \in \Delta_1(p-j+1,s-s'+1;\beta'')$.

The sequence $i$ which satisfies $g(i,\alpha) \in \Delta_1(p,s;\alpha)$ can be obtain by 'gluing' the two sequence $i'$ and $i''$. We provide the Figure \ref{Fig-Case 1} part (a) for the idea of gluing two graphs. More precisely, we define a canonical $(p-s+1)$-sequence $i=(i^{(1)},\ldots,i^{(p)})$ by $i^{(k)} = i'^{(k)}$ for $1\le k\le j-1$, and $i^{(k)} = i''^{(k)}+j-s'$ for $j\le k \le p$. One can easily check that $i$ is a canonical $(p-s+1)$ sequence, and the subsequence $(i^{(1)},\ldots,i^{(j-1)})$ and $(i^{(j)},\ldots,i^{(p)})$ has no common vertex. Thus, the subgraph of $g(i,\alpha)$ from $\alpha_1$ to $\alpha_j$ and the subgraph from $\alpha_j$ to $\alpha_{p+1}$ do not have coincide edge and satisfy the definition of $\Delta_1(p,s;\alpha)$-graph. Therefore, we can conclude that the graph $g(i,\alpha) \in \Delta_1(p,s;\alpha)$ so $\alpha \in \cC_{s,p}^{(1)}$.

\begin{figure}[h]
\centering
\subfigure[Combine two $\Delta_1(p,s;\alpha)$-graphs $g(i',\alpha')$ and $g(i'',\alpha'')$.]{
\begin{minipage}{7cm}
\centering
\includegraphics[scale=0.35]{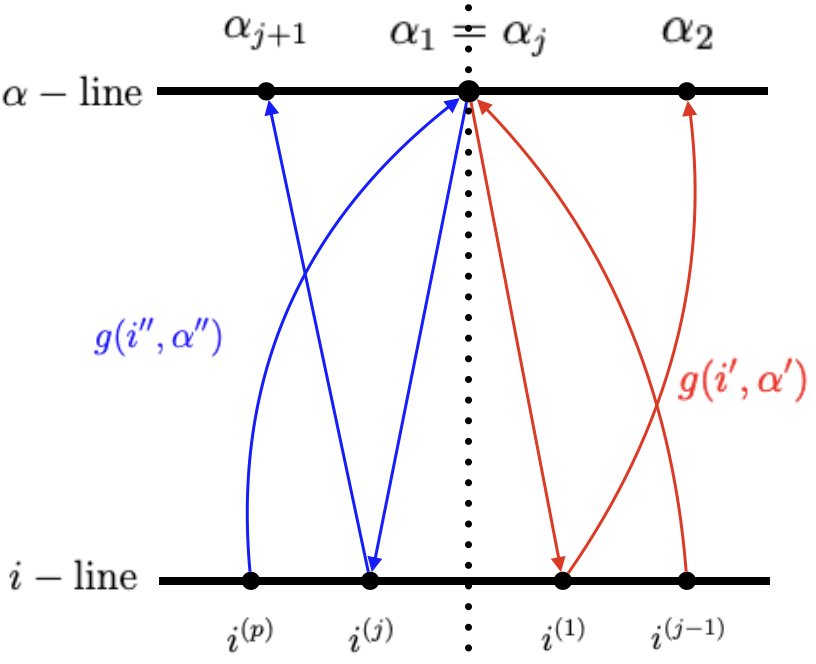}
\end{minipage}
}
\hspace{0.5cm}
\subfigure[Insert the coincident edges $i^{(1)} \to \alpha_2$ and $\alpha_2 \to i^{(2)}$.]{
\begin{minipage}{7cm}
\centering
\includegraphics[scale=0.35]{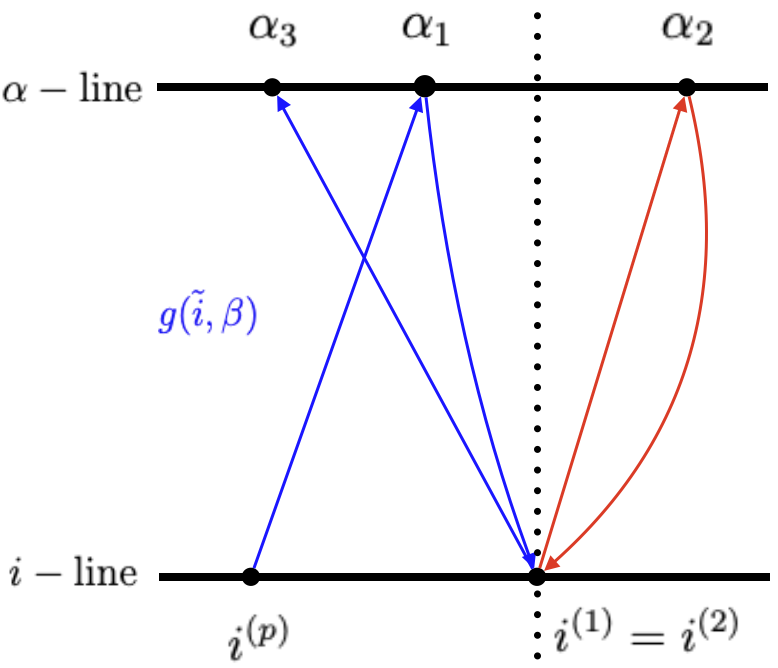}
\end{minipage}
}
\caption{}
\label{Fig-Case 1}
\end{figure}

\noindent{\bf Case 2.} For any $1<j<p+1$, $\alpha_j \not= \alpha_1$. In this case, we have $\alpha_1=1, \alpha_2=2$. We consider the following two subcases.

\noindent{\bf Case 2(a).} If for any $2<k<p+1$, $\alpha_k \not= \alpha_2$. We consider the sequence $\beta = (\beta_1,\ldots,\beta_{p-1})$ given by $\beta_1=\alpha_1$ and $\beta_k = \alpha_{k+1}-1$. Then $\beta$ is a non-crossing canonical $(s-1)$-sequence of length $p-1$. By induction hypothesis, $\beta \in \cC_{s-1,p-1}^{(1)}$, and $g(\tilde i,\beta) \in \Delta_1(p-1,s-1;\beta)$ for some canonical $(p-s+1)$-sequence $\tilde i = (\tilde i^{(1)},\ldots,\tilde i^{(p-1)})$. The sequence $i$ which satisfies $g(i,\alpha) \in \Delta_1(p,s;\alpha)$ can be obtained by 'inserting' the vertex of value 1 to the sequence $\tilde i$ between $\tilde i^{(1)}$ and $\tilde i^{(2)}$. The part (b) of Figure \ref{Fig-Case 1} is provided for the idea of inserting the coincident edges between $i^{(1)} = i^{(2)}$ and $\alpha_2$. More precisely, we define a canonical $(p-s+1)$-sequence $i=(i^{(1)},\ldots,i^{(p)})$ by $i^{(1)} = i^{(2)}=1$ and $i^{(k)} = \tilde i^{(k-1)}$ for $3 \le k\le p$. One can easily check that $i$ is a canonical $(p-s+1)$-sequence of length $p$, and there are exactly two edges with vertex $\alpha_2$: an up edge $i^{(1)} \to \alpha_2$ and a down edge $\alpha_2 \to i^{(2)}$. Noting that $i^{(2)} = i^{(1)}$, the up edge and down edge coincide, but they do not coincide with other edges. Thus, we have $g(i,\alpha) \in \Delta_1(p,s;\alpha)$, which means that $\alpha \in \cC_{s,p}^{(1)}$.

\noindent{\bf Case 2(b).} If there exist $2<k\le p$, such that $\alpha_k=\alpha_2$. Then we can split the sequence $\alpha$ into two subsequences $\alpha',\alpha''$, where $\alpha'=(\alpha_2,\ldots,\alpha_{k-1})$, and $\alpha''=(\alpha_1,\alpha_{k+1},\ldots,\alpha_p)$. One can use the argument of Case 1 to deduce that there are two canonical sequences $i_1,i_2$ of length $k-2$ and $p-k+1$ respectively, such that the graph $g(i_1,\alpha')$ and $g(i_2,\alpha'')$ satisfy the definition of $\Delta_1$-graph. We shift the sequence $i_1$ by adding 1 to the value of each vertex, and denote by $i_1'$ the sequence after shifting. We also shift the sequence $i_2$ by adding $|i_1|$ to all vertices that do not have value $1$. We write $i_2'$ for the sequence after shifting. Then one can glue the two sequences $i_1'$ and $i_2'$ using the argument in Case 1. More precisely, the canonical sequence $i = (i^{(1)},\ldots,i^{(p)})$ can be defined by $i^{(1)} = 1$, $i^{(j)} = i_1'^{(j-1)}$ for $2 \le j \le k-1$, and $i^{(j)} = i_2'^{(j-k+1)}$ for $k \le j \le p$. The non-crossing of the sequence $\alpha$ ensure that the graph $g(i,\alpha) \in \Delta_1(p,s;\alpha)$.
\end{proof}

In the following, we study the graph $g(i,\alpha)$ for non-crossing sequence $\alpha$. We first introduce the conception of paired graph and single graph.

\begin{definition} \label{Def-paired}
Let $\alpha,i$ be two sequences. The $\Delta(p;\alpha)$-graph $g(i,\alpha)$ is called a \emph{paired} graph if for any two vertices, between which the number of up edges equals to the number of down edges. The graph $g(i,\alpha)$ is called a \emph{single} graph if there exist two vertices, such that difference of the number of up edges and down edges between the two vertices is exactly one.
\end{definition}

\begin{remark}
For a $\Delta(p;\alpha)$-graph $g(i,\alpha)$, if one reduces the graph by removing an up edges with one of the coincident down edges at the same time (but keep the vertices), then a paired graph is the graph which can be reduced to a graph without edges, while a single graph is the graph that can be reduced to a graph with at least one single edge.
\end{remark}

\begin{remark}
\begin{enumerate}
    \item A $\Delta_1(p,s;\alpha)$-graph is always a paired graph. Figure \ref{Fig-2} provides two examples of paired graphs that are not $\Delta_1(p,s;\alpha)$-graph.
    
    \item Single graphs exist for any sequence $\alpha$. Figure \ref{Fig-1} (a) is an example of single graph. Indeed, one only need to choose $i$ to have distinct vertices.
    
    \item There are $\Delta(p;\alpha)$-graphs which is neither a paired graph nor a single graph. See for example Figure \ref{Fig-3} where the multiple edges in $g(i,\alpha)$ have the same orientation.
\end{enumerate}
\end{remark}

\begin{figure}[h]
\centering
\subfigure[paired graph with $p=2$, $\alpha=i=(1,1)$.]{
\begin{minipage}{7cm}
\centering
\includegraphics[scale=0.35]{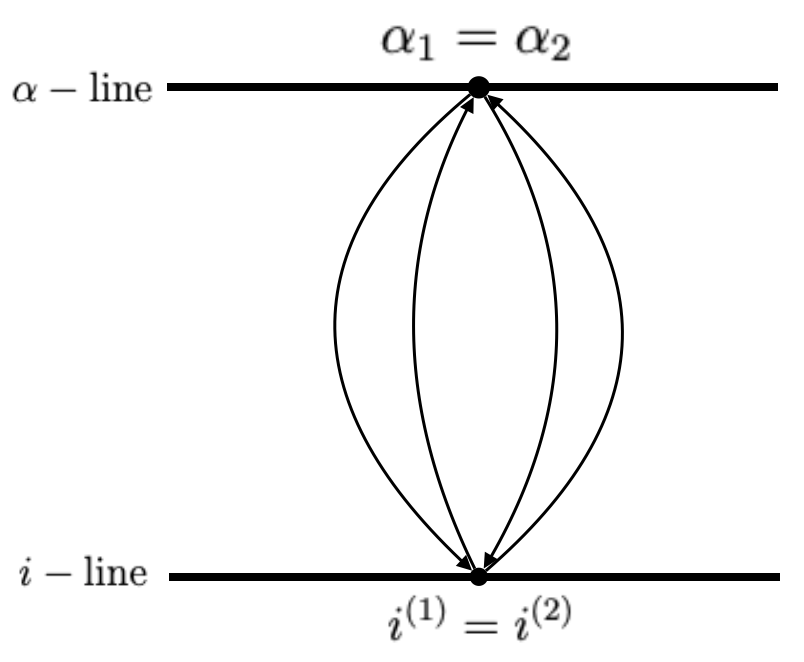}
\end{minipage}
}
\hspace{0.5cm}
\subfigure[paired graph with $p=4$, $\alpha=(1,2,1,2)$, $i=(1,2,2,1)$.]{
\begin{minipage}{7cm}
\centering
\includegraphics[scale=0.35]{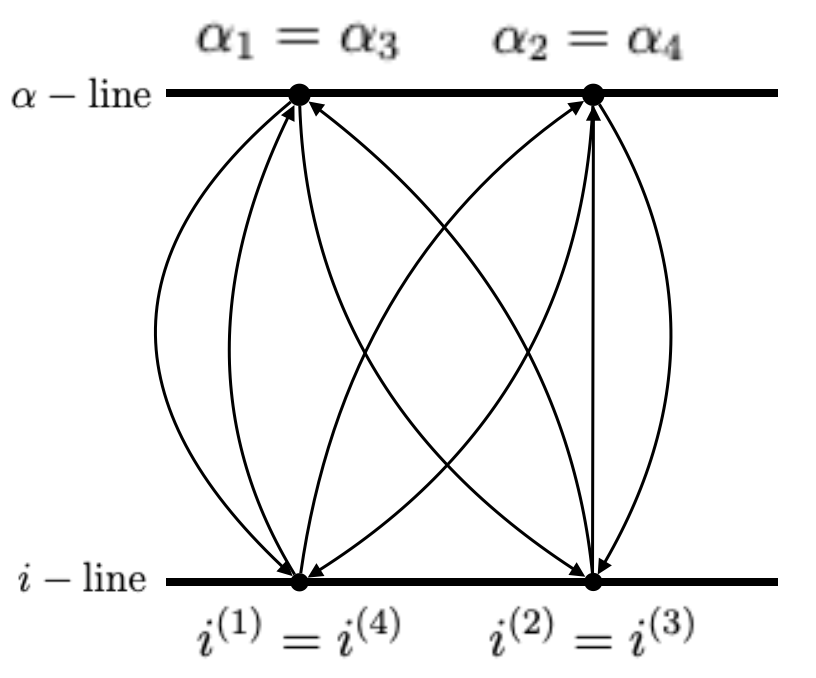}
\end{minipage}
}
\caption{}
\label{Fig-2}
\end{figure}

\begin{figure}[h]
    \centering
    \includegraphics[scale=0.35]{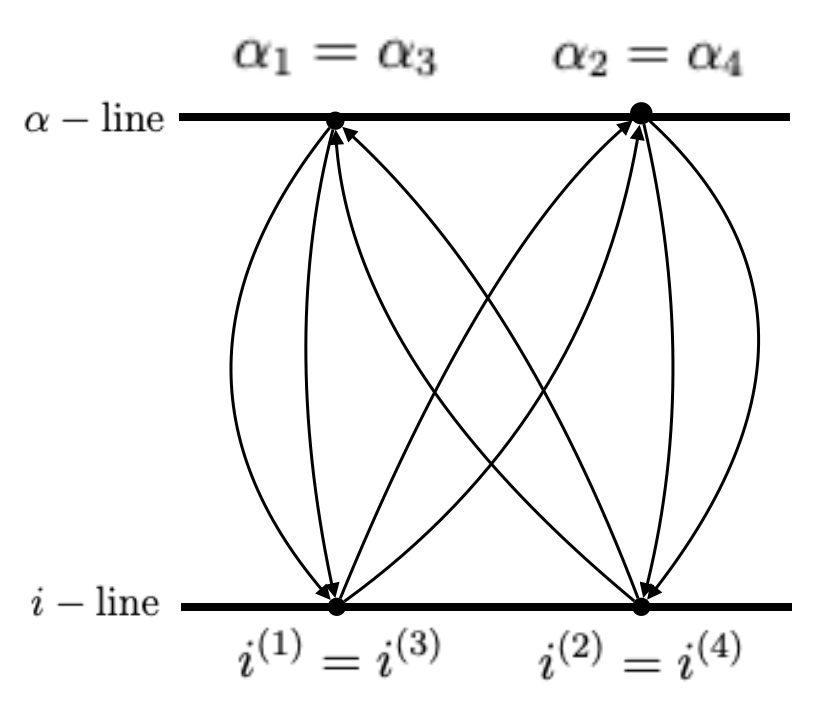}
    \caption{$g(i,\alpha)$ with $p=4$ and $\alpha=i=(1,2,1,2)$.}
    \label{Fig-3}
\end{figure}

Next, we establish the following proposition for the $\Delta(p;\alpha)$-graph for non-crossing sequence $\alpha$.

\begin{proposition} \label{Prop-non crossing}
For any $1 \le s \le p$, and $\alpha \in \cC_{s,p}^{(1)}$, for any canonical sequence $i$ of length $p$, the graph $g(i,\alpha)$ is either a paired graph or a single graph.
\end{proposition}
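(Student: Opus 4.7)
My plan is to prove the proposition by induction on $p$, using the recursive structure of non-crossing canonical sequences exploited in the proof of Lemma \ref{Lem-non crossing}. The base case $p=1$ is immediate: with $\alpha=(1)$ and $i=(1)$, the graph $g(i,\alpha)$ consists of a single coincident up--down pair, so it is paired. For the inductive step I will show that one can always either (i) exhibit a pair of vertices for which the difference of up and down edges is exactly $1$, so $g(i,\alpha)$ is single, or (ii) reduce $g(i,\alpha)$ to a smaller $\Delta(p-1;\alpha')$-graph with identical up and down edge counts between every surviving pair of vertices, whence the inductive hypothesis together with Theorem \ref{Thm-non-crossing} applies.

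The first case I would handle is when $\alpha$ has a cyclic adjacent repeat, i.e.\ $\alpha_u=\alpha_{u+1}$ for some $u\in[p]$ (with the convention $\alpha_{p+1}=\alpha_1$). Then the down edge $\alpha_u\to i^{(u)}$ and the up edge $i^{(u)}\to\alpha_{u+1}$ coincide between the same pair of vertices, and deleting both edges (equivalently, removing $\alpha_{u+1}$ from $\alpha$ and $i^{(u)}$ from $i$, cyclically adjusted) yields a reduced graph $g(i',\alpha')$ with the same up and down edge counts between every surviving pair of vertices. The reduced $\alpha'$ stays non-crossing and, after relabeling, canonical, so by Theorem \ref{Thm-non-crossing} it lies in $\cC_{s',p-1}^{(1)}$ and the induction goes through.

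When $\alpha$ has no cyclic adjacent repeat, the key new input is the structural claim that any non-crossing sequence of length at least $2$ with no linear adjacent repeat must contain some value appearing exactly once. I plan to prove this by a secondary induction: if $\alpha_1=1$ appears only once we are done; otherwise, letting $j_2\ge 3$ be its second occurrence, the non-crossing condition forces the block at positions $\{2,\ldots,j_2-1\}$ to be self-contained (the values appearing there do not occur elsewhere), non-crossing and free of adjacent repeats, so the claim applies recursively to this shorter block. Given such a singleton value $v$ at position $j$, the only edges incident to the $\alpha$-vertex $v$ are the up edge $i^{(j-1)}\to v$ and the down edge $v\to i^{(j)}$ (with $i^{(0)}:=i^{(p)}$). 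If $i^{(j-1)}\neq i^{(j)}$, row $v$ of the matrix $D(v,\cdot)-U(v,\cdot)$ has exactly one $+1$ entry and one $-1$ entry, so $g(i,\alpha)$ is single; if $i^{(j-1)}=i^{(j)}$, these two edges form a coincident pair that is removed just as in the previous case.

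The main obstacle I anticipate is the careful bookkeeping to verify that each reduction truly preserves the pairwise up/down edge counts, especially when the removal wraps around the cyclic closure $\alpha_p\to\alpha_1$, together with making the structural lemma on singletons fully precise via the self-containedness of non-crossing blocks. Once these technicalities are nailed down, the induction becomes mechanical.
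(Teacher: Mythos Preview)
Your inductive approach is correct and genuinely different from the paper's argument. The paper proceeds by contradiction: assuming $g(i,\alpha)$ is neither paired nor single, it locates a pair of \emph{consecutive} same-direction coincident edges (two coincident up edges, or two coincident down edges, with no opposite-direction coincident edge between them) of minimal distance, and then uses the non-crossing property of $\alpha$ to analyse the subpath between them; counting the edges incident to the relevant $i$-vertex on that subpath forces either a single edge (contradicting non-singleness) or a strictly closer consecutive pair (contradicting minimality). Your proof instead exploits the recursive structure of non-crossing sequences directly: either $\alpha$ has a cyclic adjacent repeat, or (by your structural lemma) it has a singleton value, and in each case one either sees immediately that the graph is single or peels off a matched up/down pair and reduces to length $p-1$.

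The two routes buy different things. The paper's argument is self-contained in that it introduces no auxiliary lemma about non-crossing sequences, but it pays for this with the ad hoc ``consecutive edge'' machinery and the minimal-distance extremal step. Your argument is more structural and arguably cleaner: the lemma ``a non-crossing sequence of length $\ge 2$ with no adjacent repeat has a singleton'' is a standard and easy fact about non-crossing partitions, and once it is in hand the induction is mechanical because each of your reductions removes exactly one up edge and one down edge between the \emph{same} pair of vertices, so the differences $D(a,b)-U(a,b)$ are preserved for every surviving pair, while any vertex that disappears (the unique $i^{(u)}$ in the adjacent-repeat case, or the singleton $\alpha$-vertex $v$) had $D=U$ at its sole neighbour. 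The bookkeeping you flag --- the cyclic wraparound when $u=p$ or $j=1$, and the self-containment of the block $\{2,\ldots,j_2-1\}$ forced by non-crossingness --- is routine and goes through exactly as you outline.
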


In order to prove Proposition \ref{Prop-non crossing}, we need to introduce the conception of \emph{consecutive} down (resp. up) edges.
Let $\alpha,i$ be two canonical sequences.
For any two coincide down edges $\alpha_{j_1} \to i^{(j_1)}$ and $\alpha_{j_2} \to i^{(j_2)}$ with some $1 \le j_1<j_2 \le p$, if all up edges $\{ i^{(j)} \to \alpha_{j+1}: j_1 \le j < j_2\}$ between the two down edges do not coincide with them (without considering the orientation), then we call the two down edges $\alpha_{j_1} \to i^{(j_1)}$ and $\alpha_{j_2} \to i^{(j_2)}$ are \emph{consecutive} down edges with \emph{distance} $j_2-j_1$.
Similarly, for any two coincide up edges $i^{(j_1)} \to \alpha_{j_1+1}$ and $i^{(j_2)} \to \alpha_{j_2+1}$ with $1 \le j_1<j_2 \le p$, if down edge $\alpha_j \to i^{(j)}$ does not coincide with them for any $j_1+1 \le j \le j_2$, then we call the two up edges \emph{consecutive} up edges with \emph{distance} $j_2-j_1$. In the graph given by Figure \ref{Fig-3}, the two coincident down edges $\alpha_1 \to i^{(1)}$ and $\alpha_3 \to i^{(3)}$ are consecutive down edges with distance 2, while the two coincident up edges $i^{(1)} \to \alpha_2$ and $i^{(3)} \to \alpha_4$ are consecutive up edges with distance 2.

\begin{proof} (of Proposition \ref{Prop-non crossing})
We prove by contradiction. We fix the sequence $\alpha \in \cC_{s,p}^{(1)}$. Assume that there exists a canonical sequence $i = (i^{(1)}, \ldots, i^{(p)})$, such that the graph $g(i,\alpha)$ is neither a paired graph nor a single graph. By definition, there exist two vertices, such that the numbers of the up and down edges between the two vertices are different by at lease two. Thus, there are consecutive up edges or consecutive down edges. We choose the pair of consecutive edges with the smallest distance and consider the case that they are up edges and down edges separately. If there are more than one pair of consecutive edges with the smallest distance, we can choose any one of them.

\noindent{\bf Case 1.} The pair of consecutive edges with smallest distance are down edges $\alpha_{j_1} \to i^{(j_1)}$ and $\alpha_{j_2} \to i^{(j_2)}$ with some $1 \le j_1<j_2 \le p$.

We restrict out attention to the path $P: \alpha_{j_1} \to i^{(j_1)} \to \alpha_{j_1+1} \to \ldots \to i^{(j_2-1)} \to \alpha_{j_2}$. For all vertices that coincides with $i^{(j_1)}$, we denote by $A$ the collection of their neighbourhoods among the collection $\{\alpha_j: j_1+1 \le j \le j_2-1\}$ and $E$ the corresponding collection of edges. We denote by $B$ the collection $\{\alpha_1, \ldots, \alpha_{j_1}, \alpha_{j_2}, \ldots, \alpha_{p+1}\}$. We keep the multiplicity for coincide vertices (resp. edges) for $A$ (resp. $E$).

Note that vertices in $A$ do not coincide with $\alpha_{j_1}$ by the definition of consecutive down edges, and do not coincide with any vertex in $B$ since $\alpha$ is non-crossing. Thus, $A$ and $B$ are disjoint.
Since the vertex $i^{(j_1)}$ is not the endpoint of the path $P$, the numbers of the up edges and down edges within $P$ associated with $i^{(j_1)}$ are the same. Noting that on the path $P$, the edge $\alpha_{j_1} \to i^{(j_1)}$ is the only edge associated with $i^{(j_1)}$ that are not in $E$, so the number of edges in $E$ is odd.
Hence, there exist the coincide edges in $E$ consist of different number of up edges and down edges. If the difference of up and down coincident edges is exactly one, then the graph is a single graph, which is a contradiction.
If the up and down coincident edges differ by at least two, then there is another pair of consecutive edges. This also leads to a contradiction since the consecutive down edges $\alpha_{j_1} \to i^{(j_1)}$ and $\alpha_{j_2} \to i^{(j_2)}$ should have the smallest distance.

\noindent{\bf Case 2.} The pair of consecutive edges with smallest distance are up edges $i^{(j_1)} \to \alpha_{j_1+1}$ and $i^{(j_2)} \to \alpha_{j_2+1}$ with $1 \le j_1<j_2 \le p$.

The argument is similar to the Case 1, and is sketched below. We consider the path $P': \alpha_{j_1+1} \to \ldots \to \alpha_{j_2} \to i^{(j_2)} \to \alpha_{j_2+1}$. For all vertices that coincides with $i^{(j_2)}$, we denote by $A'$ the collection of their neighbourhoods among $\{\alpha_j: j_1+2 \le j \le j_2\}$ and $E'$ the corresponding collection of edges. We also denote $B' = \{\alpha_1, \ldots, \alpha_{j_1+1}, \alpha_{j_2+1}, \ldots, \alpha_{p+1}\}$. Then by the definition of consecutive up edges and the fact that $\alpha$ is non-crossing, one can deduce that $A'$ and $B'$ are disjoint. Besides, by analyzing the neighbourhood of $i^{(j_2)}$ in the path $P'$, one can deduce that the number of edges in $E'$ is odd. This contradicts to either the condition that $g(i,\alpha)$ is not a single graph or the assumption that consecutive edges have distance at least $j_2-j_1$.
\end{proof}

\subsection{Paired graph} \label{sec:paired garph}

In this subsection, we study the paired graphs, which contribute to the moments in Section \ref{sec:moment}. For graphs that are not single graph, we have the following proposition for the number of vertices.

\begin{proposition} \label{Prop-number of vertices}
For any $1 \le r,s \le p$, for any $\alpha \in \cC_{s,p}$ and $i \in \cC_{r,p}$, we have the following statements:
\begin{enumerate}
    \item If $g(i,\alpha)$ is a paired graph, then $r+s \le p+1$. The equality holds if and only if $g(i,\alpha)$ is a $\Delta_1(p,s;\alpha)$-graph.
    \item If $g(i,\alpha)$ is neither a paired graph nor a single graph, then $r+s \le p$.
\end{enumerate}
\end{proposition}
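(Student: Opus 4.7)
The plan is to unify both parts via the underlying undirected structure. I let $G_0$ be the simple undirected graph whose vertex set consists of the $r+s$ noncoincident vertices of $g(i,\alpha)$, with an edge joining two vertices whenever at least one directed edge of $g(i,\alpha)$ connects them. For each edge $e$ of $G_0$ I write $u_e$ and $d_e$ for the numbers of up and down edges of $g(i,\alpha)$ sitting over $e$, and set $m_e=u_e+d_e\ge 1$. Then $\sum_e m_e = 2p$ and, because $g(i,\alpha)$ is connected, $|E(G_0)|\ge r+s-1$ with equality exactly when $G_0$ is a spanning tree.

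For part (1), being paired means $u_e=d_e$ for every $e$; combined with $m_e\ge 1$ this forces $m_e\ge 2$. Substituting into the two identities above gives $2p=\sum_e m_e\ge 2|E(G_0)|\ge 2(r+s-1)$, i.e.\ $r+s\le p+1$. Equality forces $G_0$ to be a spanning tree \emph{and} $m_e=2$ (equivalently $u_e=d_e=1$) for every edge, which is exactly the definition of a $\Delta_1(p,s;\alpha)$-graph after gluing each coincident up-down pair and dropping orientation.

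For part (2) I first rerun the same estimate. The hypothesis ``neither paired nor single'' says $|u_e-d_e|\ne 1$ for every edge $e$ of $G_0$, so $|u_e-d_e|$ is either $0$ or at least $2$; since $m_e$ and $u_e-d_e$ share parity and $m_e\ge |u_e-d_e|$, again $m_e\ge 2$ for every $e$, hence $r+s\le p+1$ by the argument of part (1). I then need to rule out equality. Assume for contradiction that $r+s=p+1$; then $G_0$ is a spanning tree and every $m_e=2$, so each $(u_e,d_e)$ lies in $\{(0,2),(1,1),(2,0)\}$. Because $g(i,\alpha)$ is not paired, there is at least one edge $e^*$ with $(u_{e^*},d_{e^*})\in\{(0,2),(2,0)\}$.

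The main work is now to exclude this configuration, and I expect this to be the only nontrivial step. After fixing an arbitrary orientation of each edge of $G_0$, I define the signed imbalance $f(e):=u_e-d_e$. The fact that $g(i,\alpha)$ is a closed walk with alternating up/down edges forces in-degree to equal out-degree at every vertex, so $f$ is a balanced flow on the tree $G_0$. A standard leaf-peeling argument then forces $f\equiv 0$: the unique edge at any leaf must carry zero flow by Kirchhoff's law at that leaf, so delete the leaf and iterate. This contradicts $f(e^*)\ne 0$ and completes part (2). The only bookkeeping subtlety is to choose the reference orientations consistently so that ``balanced'' really reduces to a single linear constraint at each vertex, but this is routine.
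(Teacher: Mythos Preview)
Your proof is correct and follows essentially the same strategy as the paper: pass to the underlying simple graph $G_0$, use $m_e\ge 2$ (from ``not single'') to get $|E(G_0)|\le p$ and hence $r+s\le p+1$, and then analyze the equality (tree) case. The only difference is in how equality is excluded for part~(2): the paper observes directly that a tree edge carrying two same-direction copies is a cut edge across which the closed walk cannot return (so the directed graph would fail to be connected), while you phrase the same obstruction as a Kirchhoff/leaf-peeling argument forcing the imbalance $u_e-d_e$ to vanish identically on a tree.
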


\begin{proof}
If $g(i,\alpha)$ is not a single graph, then all edges must coincide with at least one other edges. If we remove the orientation and glue all the coincide edges, it results in non-directed connected graph with at most $p$ edges and exactly $r+s$ vertices, which implies that $r+s \le p+1$. The equality holds if and only if the resulting graph is a tree with exactly $p$ edges. In this case, all edges in the graph $g(i,\alpha)$ must coincide with exactly one other edge. If there are two coincident edges that have the same orientation, then directed graph $g(i,\alpha)$ is disconnected, which is a contradiction. Thus, the equality only happens when the graph $g(i,\alpha)$ is a $\Delta_1(p,s;\alpha)$-graph.
\end{proof}

Next, we introduce the \emph{Stirling number of the second kind} with the notation $S(n,k)$, which is defined as the number of ways to partition a set of $n$ objects into $k$ non-empty subsets. For non-crossing sequence $\alpha$, the following proposition counts the number of paired graph associate to $\alpha$.

\begin{proposition} \label{Prop-number of paired graph}
For any $1 \le s \le p$ and any sequence $\alpha \in \cC_{s,p}^{(1)}$, the number of sequence $i \in \cC_{r,p}$ such that $g(i,\alpha)$ is a paired graph is $S(p+1-s,r)$ if $r \le p+1-s$, and is 0 if $r > p+1-s$.
\end{proposition}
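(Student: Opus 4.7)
The plan is to establish a bijection
\begin{equation*}
\{i\in\cC_{r,p}: g(i,\alpha)\text{ is paired}\}\ \longleftrightarrow\ \{\text{partitions of } \{1,\ldots,p+1-s\} \text{ into } r \text{ non-empty blocks}\},
\end{equation*}
whose cardinality is $S(p+1-s,r)$ by definition of the Stirling number of the second kind. The degenerate case $r>p+1-s$ is immediate: by Proposition~\ref{Prop-number of vertices}, no paired graph can satisfy $r+s>p+1$, consistent with the convention $S(n,k)=0$ for $k>n$.

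For $r\le p+1-s$, by Theorem~\ref{Thm-non-crossing} combined with Lemma~\ref{lem-Bai} there is a unique canonical sequence $\tilde{i}\in\cC_{p+1-s,p}$ with $g(\tilde{i},\alpha)\in\Delta_1(p,s;\alpha)$, and its distinct values form $\{1,\ldots,p+1-s\}$. Given a partition $\pi$ of this set into $r$ non-empty blocks, define $i_\pi^{(j)}$ to be the label (in order of first appearance along $j=1,2,\ldots$) of the block of $\pi$ containing $\tilde{i}^{(j)}$. The paired-ness of $g(i_\pi,\alpha)$ is the easy direction: in $g(\tilde{i},\alpha)$ each pair of coincident edges is a down/up pair sharing both endpoints, so any identification of $i$-vertices preserves coincidence within each pair. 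Equivalently, the multiset identity $\{\alpha_j:\tilde{i}^{(j)}=v\}=\{\alpha_{j+1}:\tilde{i}^{(j)}=v\}$ valid at each $v$ carries through taking unions over blocks of $\pi$.

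The crux is the reverse direction: given a paired $g(i,\alpha)$ with $i\in\cC_{r,p}$, one must prove the implication
\begin{equation*}
\tilde{i}^{(j)}=\tilde{i}^{(k)} \quad \Longrightarrow \quad i^{(j)}=i^{(k)}.
\end{equation*}
Once this holds, $\phi(\tilde{i}^{(j)}):=i^{(j)}$ defines a surjection $\{1,\ldots,p+1-s\}\twoheadrightarrow\{1,\ldots,r\}$ whose fibres form the partition $\pi$ with $i_\pi=i$, and the maps $\pi\mapsto i_\pi$ and $i\mapsto \pi$ are mutually inverse.

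The main obstacle is proving this implication, and I plan to do so by induction on $p$ using the leaf structure of the underlying $\Delta_1$-tree (obtained from $g(\tilde{i},\alpha)$ by gluing each coincident edge pair and forgetting orientation). The base case $p=1$ is trivial. For $p\ge 2$, the bipartite tree has at least two leaves, each either (i) an $\alpha$-vertex $\alpha_\ell$ appearing exactly once in $\alpha$, or (ii) an $i$-vertex $\tilde{i}^{(\ell)}$ appearing exactly once in $\tilde{i}$, in which case one necessarily has $\alpha_\ell=\alpha_{\ell+1}$. In case (i), the paired condition at $\alpha_\ell$ in $g(i,\alpha)$ collapses to $\mathbf{1}[i^{(\ell-1)}=v]=\mathbf{1}[i^{(\ell)}=v]$ for every $v$, forcing $i^{(\ell-1)}=i^{(\ell)}$; deleting position $\ell$ and the $\alpha$-value $\alpha_\ell$ produces a paired graph over a non-crossing sequence of length $p-1$ whose $\Delta_1$-skeleton is the original tree with that leaf pruned, to which the inductive hypothesis applies. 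In case (ii), the two edges at position $\ell$ in $g(i,\alpha)$ have common endpoints $(\alpha_\ell,i^{(\ell)})$ and form a self-contained paired pair, so removing position $\ell$ yields a paired graph of length $p-1$; since $\tilde{i}^{(\ell)}$ appears only at position $\ell$ in $\tilde{i}$, no equivalence involving $\ell$ remains to be checked beyond the reduced case. This completes the induction and establishes the bijection.
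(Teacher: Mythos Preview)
Your argument is correct and establishes the same bijection
\[
\Phi:\ \{\text{partitions of }[p+1-s]\text{ into }r\text{ blocks}\}\ \longrightarrow\ \{i\in\cC_{r,p}: g(i,\alpha)\text{ is paired}\},\qquad \pi\mapsto \pi(\tilde i),
\]
as the paper, with the degenerate case $r>p+1-s$ handled identically via Proposition~\ref{Prop-number of vertices}. The difference lies in how surjectivity of $\Phi$ (your ``reverse direction'' implication $\tilde i^{(j)}=\tilde i^{(k)}\Rightarrow i^{(j)}=i^{(k)}$) is proved. The paper argues \emph{bottom-up}: starting from an arbitrary paired $g(i,\alpha)$, it iteratively \emph{splits} $i$-vertices, first to eliminate multiple up/down edge pairs between the same two vertices (driving an auxiliary counter $K$ to zero), then to break cycles in the glued undirected graph (driving a counter $C$ to zero), thereby reaching the unique $\Delta_1(p,s;\alpha)$ graph; the splitting steps reuse constructions from \cite{BYY2022}*{Lemmas 3.3 and 3.6}. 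You argue \emph{top-down} by induction on $p$, pruning a leaf of the $\Delta_1$ tree: an $\alpha$-leaf forces $i^{(\ell-1)}=i^{(\ell)}$ from pairedness at that unique $\alpha$-vertex, while an $\tilde i$-leaf carries no constraint beyond the reduced graph; in both cases one removes a single up/down pair with common endpoints, so pairedness, non-crossingness, and the $\Delta_1$ structure all descend to length $p-1$.

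Both routes are sound. The paper's splitting procedure is more algorithmic and makes explicit how every paired graph is a quotient of the $\Delta_1$ skeleton; it also plugs directly into the existing lemmas of \cite{BYY2022}. Your leaf-pruning induction is more self-contained and arguably cleaner: it exploits only the tree structure of $\Delta_1(p,s;\alpha)$ and the local pairedness condition, avoiding the somewhat delicate case analysis of where to split. A minor point worth making explicit in a write-up is the cyclic convention $i^{(0)}:=i^{(p)}$ in case~(i) when $\ell=1$, and that the reduced sequences need relabelling to be canonical (harmless, since the implication is permutation-invariant).
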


\begin{proof}
The case $r>p+1-s$ is straightforward from Proposition \ref{Prop-number of vertices}. In the following, we only consider the case $r \le p+1-s$. We fix a sequence $\alpha \in \cC_{s,p}^{(1)}$.

Firstly, we will show that any paired graph $g(i,\alpha)$ can be transferred to a $\Delta_1(p,s;\alpha)$-graph by splitting the vertices in the sequence $i$. By definition, one can easily see that paired graphs may have more than one pair of up and down edges between two vertices, and may have cycles if the multiple edges are glued and orientation are removed. Thus, our strategy is to remove the multiple pairs of up and down edges in the first step, and then remove the cycles in the second step.

\noindent{\bf Step 1.} For any two vertices $v_1,v_2$ in the paired graph $g(i,\alpha)$, we denote by $m_{v_1,v_2}$ the number of the up edges between $v_1$ and $v_2$. We define
\begin{align*}
    K(g(i,\alpha)) = \sum_{v_1,v_2} \left( m_{v_1,v_2}-1 \right)
\end{align*}
where the sum $\sum_{v_1,v_2}$ is over all pairs of vertices $(v_1,v_2)$ that are neighbourhood in $g(i,\alpha)$. One can easily check by definition that $K(g(i,\alpha)) = 0$ if and only if every edge in $g(i,\alpha)$ coincides with exactly one edge, and the two coincident edges have different orientation.

For the case $K(g(i,\alpha)) = 1$, there exists two vertices, between which there are two up edges and two down edges. In the following, we will split the corresponding $i$-vertex can into two vertices and resulting in a new $i$-sequence $i'$, such that the the graph $g(i',\alpha)$ is a paired graph without coincident edges of the same orientation. The argument is similar to \cite{BYY2022}*{Lemma 3.3}, and is sketched below in two cases.

\noindent {\bf Case 1.} If we scan the edges from $\alpha_1$ to $\alpha_{p+1}$, the first appearance of the four coincident edges is an down edges. In this case, the coincident edges are the $j$th down edge $\alpha_j \to i^{(j)}$, the $l$th down edge $\alpha_l \to i^{(l)}$, the $j'$th up edge $i^{(j')} \to \alpha_{j'+1}$ and the $l'$th up edge $i^{(l')} \to \alpha_{l'+1}$ for some $j<j'+1\le l<l'+1$. We split the vertex $i^{(l)}$ into two vertices $i^{(l,1)}$ and $i^{(l,2)}$. The edges from $\alpha_1 \to i^{(1)}$ to $i^{(l-1)} \to \alpha_l$ that connects $i^{(l)}$ are plotted to connect $i^{(l,1)}$, while the edges from $\alpha_l \to i^{(l)}$ to $i^{(p)} \to \alpha_{p+1}$ that connects $i^{(l)}$ are plotted to connect $i^{(l,2)}$. See Figure \ref{Fig-split-Case 1} below.

\begin{figure}[h]
    \centering
    \includegraphics[scale=0.5]{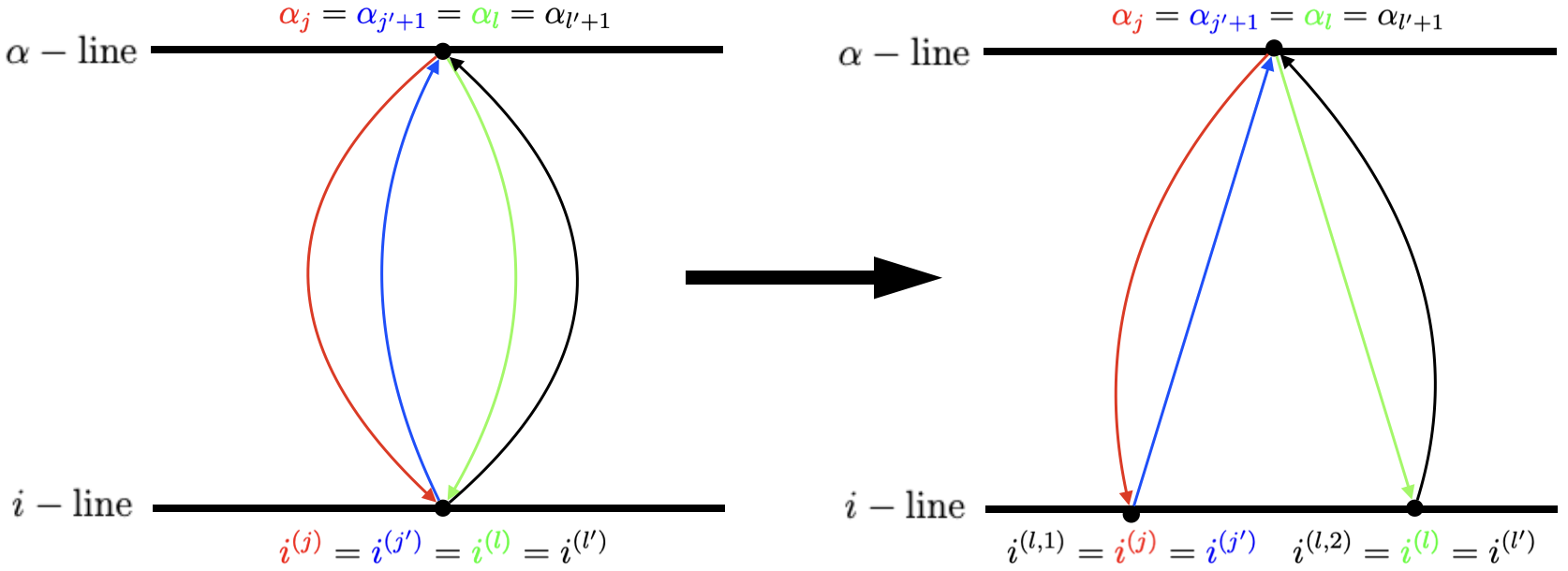}
    \caption{Case 1-Split an $i$ vertex to cancel multiple pairs of edges.}
    \label{Fig-split-Case 1}
\end{figure}

\noindent {\bf Case 2.} If we scan the edges starting from $\alpha_1 \to i^{(1)}$, the first appearance of the four coincident edges is an up edges. In this case, the coincident edges are the $j$th up edge $i^{(j)} \to \alpha_{j+1}$, the $l$th up edge $i^{(l)} \to \alpha_{l+1}$, the $j'$th down edge $\alpha_{j'} \to i^{(j')}$ and the $l'$th down edge $\alpha_{l'} \to i^{(l')}$ for some $j<j'\le l<l'$. We split the vertex $i^{(l)}$ into two vertices $i^{(l,1)}$ and $i^{(l,2)}$. The edges from $\alpha_{j'} \to i^{(j')}$ to $i^{(l)} \to \alpha_{l+1}$ that connects $i^{(l)}$ are plotted to connect $i^{(l,2)}$, while the rest of the edges that connects $i^{(l)}$ are plotted to connect $i^{(l,1)}$. See Figure \ref{Fig-split-Case 2} below.

\begin{figure}[h]
    \centering
    \includegraphics[scale=0.5]{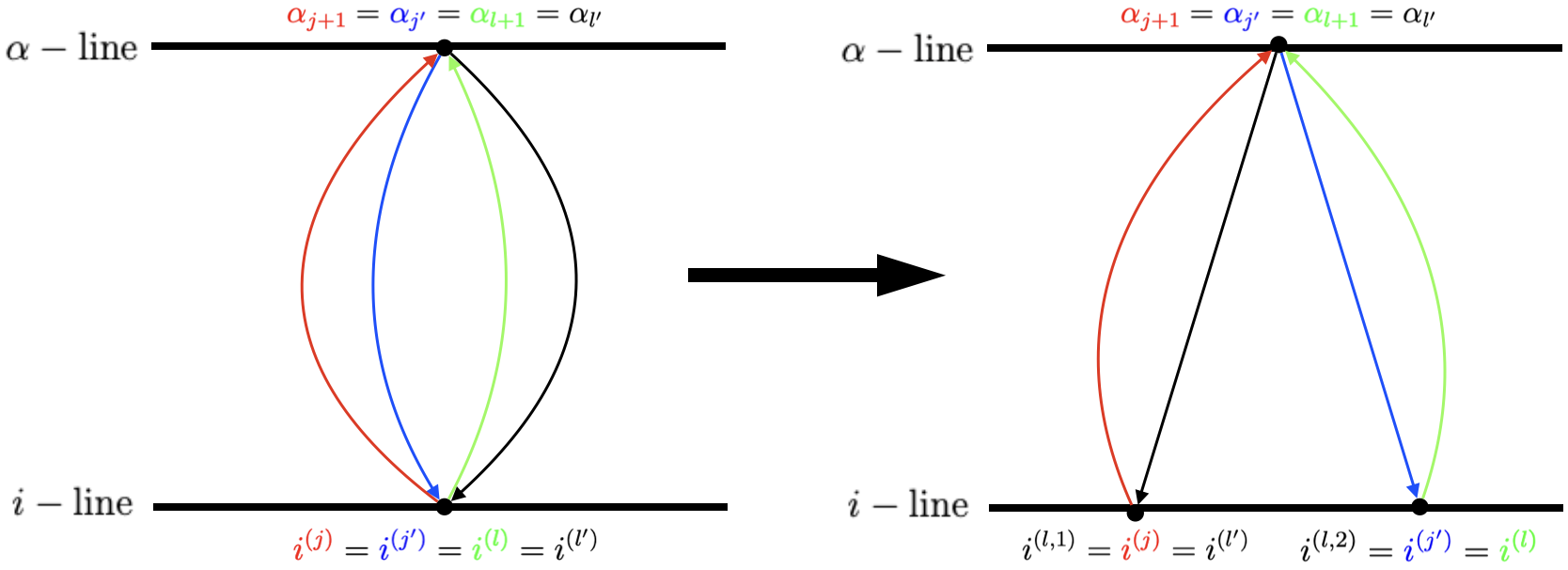}
    \caption{Case 2-Split an $i$ vertex to cancel multiple pairs of edges.}
    \label{Fig-split-Case 2}
\end{figure}

In both cases, one could check that after splitting the vertex $i^{(l)}$, the graph is still connected, and is paired graph. Moreover, the number of edges between any two vertices is either 0 or 2, which implies that $K(g(i',\alpha)) = 0$.

One can use induction to show that there exists a sequence $i'$, such that $K(g(i',\alpha)) = 0$ and the paired graph $g(i',\alpha)$ can be obtained from $g(i,\alpha)$ by splitting some of the vertices in $i$. Indeed, by scanning all edges starting from $\alpha_1 \to i^{(1)}$, we can find the first coincident directed edges. Then we can apply the argument above to split the $i$-vertex associated to the coincident directed edges into two $i$-vertices. We denote the resulting $i$-sequence by $\tilde i'$. Then we have $K(g(\tilde i',\alpha)) = K(g(i,\alpha)) - 1$. By the induction hypothesis, we can find a sequence $i'$ by splitting $\tilde i'$, such that $K(g(i',\alpha)) = 0$. Moreover, the $i$-sequence $i'$ can also be obtained by splitting the sequence $i$.

\noindent{\bf Step 2.} Let $g(i',\alpha)$ be a paired graph such that $K(g(i',\alpha))= 0$. Then every up edge coincides with exactly one down edge. Denote by $C(g(i',\alpha))$ the number of cycles when gluing all the pairs of coincident up edge and down edge and removing the orientation. By definition, $C(g(i',\alpha)) = 0$ if and only if $g(i',\alpha)$ is a $\Delta_1(p,s;\alpha)$-graph.

If $C(g(i',\alpha)) = 1$, then there is exactly one cycle when gluing the pair of coincident up edge and down edge. In the following, we will split one vertex in $i'$ and denote by $i''$ the new $i$-sequence, such that $g(i'',\alpha)$ is still a paired graph without any cycle when gluing all pairs of coincident up edge and down edge, and $g(i'',\alpha)$ does not have coincident edges of the same orientation. That is, $g(i'',\alpha)$ is a $\Delta_1(p,s;\alpha)$-graph. The argument is similar to \cite{BYY2022}*{Lemma 3.6}, and is sketched below.

We scan the edges starting from $\alpha_1 \to i^{(1)}$, and find the first edge that results in a cycle without considering orientation and removing the multiplicity of the edges. Using the non-crossing property of $\alpha$, one can show that this edge must be a down edge. We denote the down edge by $\alpha_j \to i^{(j)}$. We split the vertex $i^{(j)}$ into two vertices $i^{(j,1)}$ and $i^{(j,2)}$. The edges in the path $\alpha_1 \to i^{(1)} \to \ldots \to \alpha_j$ that connects $i^{(j)}$ are plotted to connect $i^{(j,1)}$, while the edges in the path $\alpha_j \to i^{(j)} \to \ldots \to i^{(p)} \to \alpha_{p+1}$ that connects $i^{(j)}$ are plotted to connect $i^{(j,2)}$. See Figure \ref{Fig-split-cycle}. Once could check that after splitting the vertex $i^{(j)}$, there is no multiple up edge or multiple down edge. Besides, there is no cycle without considering the orientation and gluing pairs of coincident up edge and down edge. Hence, if we denote by $i''$ the new $i$-sequence, then $K(g(i',\alpha)) = 0 = C(g(i',\alpha))$.

\begin{figure}[h]
    \centering
    \includegraphics[scale=0.5]{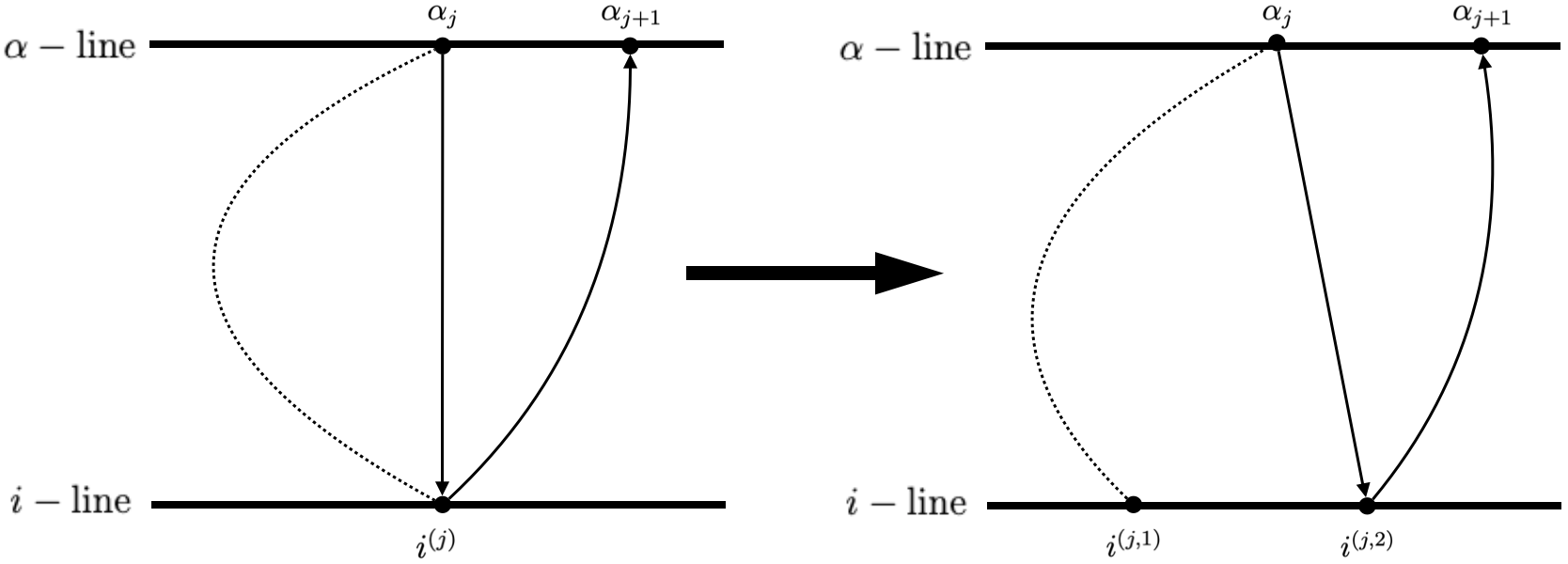}
    \caption{Split an $i$ vertex to cancel cycle.}
    \label{Fig-split-cycle}
\end{figure}

One can use induction to show that there exists a sequence $i''$, such that $C(g(i'',\alpha)) = K(g(i'',\alpha)) = 0$ and the paired graph $g(i'',\alpha)$ can be obtained from $g(i',\alpha)$ by splitting some of the vertices in $i'$. Indeed, we can scan all edges from $\alpha_1 \to i'^{(1)}$ and find the first edge which forms a cycle when gluing coincident edges and removing orientation. Then we can use the argument above to split one of the $i$-vertex in the cycle into two $i$-vertices. We denote the resulting $i$-sequence by $\tilde i''$. The splitting procedure will not lead to coincident up edges nor coincident down edges, nor new cycle when gluing all pairs of coincident edges. Thus, we have $C(g(\tilde i'',\alpha)) \le C(g(i',\alpha)) - 1$ and $K(g(\tilde i'',\alpha)) = 0$. Then by induction hypothesis, we can split vertices in $\tilde i''$ to obtain $i''$, such that $g(i'',\alpha)$ is paired graph and $C(g(i'',\alpha)) = K(g(i'',\alpha)) = 0$. Moreover, the $i$-sequence $i''$ can also be obtained by splitting vertices in the sequence $i'$.

Therefore, joining the two steps above, for any paired graph $g(i,\alpha)$, we can split vertices on $i$ to obtain $i''$, such that $g(i'',\alpha)$ is a $\Delta_1(p,s;\alpha)$-graph.

Secondly, we will establish a bijective map from the set of all partitions of $[p+1-s]$ to the set of the canonical $r$-sequence that form a paired graph with $\alpha$.

By Lemma \ref{lem-Bai}, there exists a unique canonical $(p+1-s)$-sequence $i = (i^{(1)}, \ldots, i^{(p)})$, such that $g(i,\alpha)$ is a $\Delta_1(p,s;\alpha)$-graph.
Let $\cP(p+1-s)$ be the set of all partitions of $[p+1-s]$, and $\cP(p+1-s,q)$ be the set of all partitions of $[p+1-s]$ with $q$ blocks. For a partition $\pi \in \cP(p+1-s,q)$ with blocks $V_1,V_2,\ldots, V_q$, without loss of generality, we assume that
\[ \min\{a: a \in V_1\} < \ldots < \min\{a: a \in V_q\}. \]
We identify the partition $\pi$ with the mapping $\pi: [p+1-s] \to [q]$ given by $\pi(a) = b$ if $a \in V_{b}$. We abuse the notation for partition and the corresponding mapping. By the definition, it is easy to see that $\pi$ maps a canonical sequence to a canonical sequence.
For fixed $\alpha \in \cC_{s,p}^{(1)}$, let $\cP'(\alpha)$ be the set of all canonical sequences $i'$ such that $g(i',\alpha)$ are paired graphs. We write $\cP'(\alpha,r)$ for the set of all canonical $r$-sequences $i'$ in $\cP'(\alpha)$. We consider the following mapping:
\begin{align*}
    \Phi: \cP(p+1-s) \quad &\longrightarrow \quad \cP'(\alpha) \\
    \pi \quad\quad\quad &\longrightarrow \quad \pi(i),
\end{align*}
where $\pi(i) = (\pi(i^{(1)}), \ldots, \pi(i^{(p)}))$. 

Note that for any $\pi \in \cP(p+1-s)$, $g(\pi(i),\alpha)$ can be obtained from $g(i,\alpha)$ by gluing the $i$-vertices according to the partition $\pi$. Since $g(i,\alpha)$ is a paired graph,  so is $g(\pi(i),\alpha)$, which implies that $\Phi$ is well-defined. As we have proved in the first part that any paired graph can be transferred to a $\Delta_1(p,s;\alpha)$-graph by appropriately splitting the vertices in the $i$-sequence, we can conclude that $\Phi$ is surjective. Moreover, for two different partitions $\pi_1, \pi_2 \in \cP(p+1-s)$, the two canonical sequence $\pi_1(i)$ and $\pi_2(i)$ are different. Thus, $\Phi$ is injective. Therefore, $\Phi$ is bijective.

To conclude, we consider the following restriction of $\Phi$:
\begin{align*}
    \Phi|_r: \cP(p+1-s,r) \quad &\longrightarrow \quad \cP'(\alpha,r).
\end{align*}
Since the bijectivity of $\Phi|_r$ inherites from $\Phi$, by the definition of Stirling number of the second kind, we have
\begin{align*}
    \# \cP'(\alpha,r) = \# \cP(p+1-s,r) = S(p+1-s,r).
\end{align*}
\end{proof}

We end this subsection by collecting some properties of the Stirling number of the second kind. We refer the readers to \cite{Grimaldi} for more details.

\begin{lemma} \label{Lem-Stirling number}
\begin{enumerate}
    \item We have $S(n,n) = S(n,1) =1$ for $n \ge 1$. For $1 \le k \le n$, we have
    \begin{align*}
        S(n,k) = \sum_{i=1}^k \dfrac{(-1)^{k-i} i^n}{i!(k-i)!}.
    \end{align*}
    \item For positive integers $n \ge k >1$, we have
    \begin{align*}
        S(n+1,k) = S(n,k-1) + k S(n,k).
    \end{align*}
    \item For positive integers $n$, we have
    \begin{align*}
        \sum_{k=1}^n S(n,k) \cdot x(x-1) \ldots (x-k+1) = x^n.
    \end{align*}
\end{enumerate}
\end{lemma}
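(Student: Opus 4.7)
The plan is to prove the three assertions in turn, all by standard combinatorial arguments, since $S(n,k)$ has a clean definition as the number of partitions of $[n]$ into $k$ non-empty blocks.

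For (i), the identities $S(n,1) = S(n,n) = 1$ follow directly from the definition: there is exactly one partition of an $n$-element set into a single block, and exactly one partition into $n$ singletons. For the explicit formula, I would argue via surjections. Every surjection $f:[n] \to [k]$ induces a partition of $[n]$ into $k$ non-empty blocks (the fibres $f^{-1}(j)$), and conversely each partition produces exactly $k!$ surjections by labelling its blocks. Thus the number of surjections $[n] \twoheadrightarrow [k]$ equals $k!\, S(n,k)$. On the other hand, letting $A_j$ denote the set of functions $[n] \to [k]$ that miss the value $j$, inclusion--exclusion gives that the number of surjections equals $\sum_{j=0}^{k} (-1)^j \binom{k}{j} (k-j)^n$. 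The substitution $i = k-j$ and division by $k!$ then yields the claimed formula.

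For (ii), the recurrence is a direct bijective argument conditioning on the block containing the element $n+1$. Given a partition of $[n+1]$ into $k$ blocks, either $\{n+1\}$ is itself a block, in which case the remaining $n$ elements must be partitioned into $k-1$ blocks, contributing $S(n,k-1)$ partitions, or $n+1$ is inserted into one of the $k$ blocks of a partition of $[n]$ into $k$ blocks, contributing $k\cdot S(n,k)$ partitions. Summing yields the recurrence.

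For (iii), I would use a double-counting argument. For a fixed positive integer $x$, the number of functions $f:[n] \to [x]$ is $x^n$. Grouping such functions by the cardinality $k$ of their image, one first chooses the partition of $[n]$ induced by $f$ (there are $S(n,k)$ such partitions) and then selects an injection from the $k$ blocks into $[x]$, for which there are $x(x-1)\cdots(x-k+1)$ choices. Summing over $k = 1,\ldots,n$ gives $\sum_{k=1}^n S(n,k)\, x(x-1)\cdots(x-k+1) = x^n$ for every positive integer $x$. Since both sides are polynomials in $x$ of degree $n$ which agree at infinitely many points, they must be equal as polynomials, proving the identity for all real $x$. There is no real obstacle here, as all three assertions are classical; the only mildly delicate step is the inclusion--exclusion bookkeeping in (i), which is entirely routine.
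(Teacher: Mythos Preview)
Your proof is correct. Note, however, that the paper does not actually prove this lemma: it merely states the three properties and refers the reader to a standard combinatorics textbook (Grimaldi). Your proposal supplies exactly the kind of classical arguments---surjection counting with inclusion--exclusion for (i), conditioning on the block of $n+1$ for (ii), and double counting functions by image size for (iii)---that such a reference would contain, so there is nothing to compare against and nothing missing.
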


\section{Convergence of spectral moments}
\label{sec:moment}

\subsection{Proof of Theorem \ref{Thm-main}} \label{sec:(a)}
We compute the moment
\begin{align*}
	\dfrac{1}{n^k} \bE \big[ \Tr M_{n,k,m}^p \big].
\end{align*}
for any $p \in \bN_+$.
By convention,  $\alpha_{p+1} = \alpha_{1}$. We have
\begin{align} \label{eq-moment-0.2}
	\dfrac{1}{n^k} \bE \big[ \Tr M_{n,k,m}^p \big]
	=& \dfrac{1}{n^k} \sum_{\alpha_1, \ldots, \alpha_p = 1}^m \left( \prod_{t=1}^p \tau_{\alpha_t} \right) \bE \Big[ \Tr \big( Y_{\alpha_1}^* Y_{\alpha_2} Y_{\alpha_2}^* \cdots Y_{\alpha_p} Y_{\alpha_p}^* Y_{\alpha_{p+1}} \big) \Big] \nonumber \\
	=& \dfrac{1}{n^k} \sum_{\alpha_1, \ldots, \alpha_p = 1}^m \left( \prod_{t=1}^p \tau_{\alpha_t} \right) \bE \Bigg[ \prod_{l=1}^k \Tr \left( \left( \y_{\alpha_{1}}^{(l)} \right)^* \y_{\alpha_{2}}^{(l)} \left( \y_{\alpha_{1}}^{(l)} \right)^* \ldots \y_{\alpha_{p+1}}^{(l)} \right) \Bigg] \nonumber \\
	=& \dfrac{1}{n^k} \sum_{\alpha_1, \ldots, \alpha_p = 1}^m \left( \prod_{t=1}^p \tau_{\alpha_t} \right) \left( \bE \Bigg[ \Tr \left( \left( \y_{\alpha_{1}}^{(1)} \right)^* \y_{\alpha_{2}}^{(1)} \left( \y_{\alpha_{1}}^{(1)} \right)^* \ldots \y_{\alpha_{p+1}}^{(1)} \right) \Bigg] \right)^k \nonumber \\
	=& \dfrac{1}{n^k} \sum_{\alpha_1, \ldots, \alpha_p = 1}^m \left( \prod_{t=1}^p \tau_{\alpha_t} \right) \left( \bE \left[ \sum_{i^{(1)}, \ldots, i^{(p)} = 1}^n \prod_{t=1}^p \bigg( \overline{\big( \y_{\alpha_t}^{(1)} \big)_{i^{(t)}}} \big( \y_{\alpha_{t+1}}^{(1)} \big)_{i^{(t)}} \bigg) \right] \right)^k,
\end{align}
where we used the i.i.d. setting in the third equality.

For two sequences $\alpha = (\alpha_1, \ldots, \alpha_p) \in [m]^p$ and $i = (i^{(1)}, \ldots, i^{(p)}) \in [n]^p$, let 
\begin{align} \label{eq-def-E(i_1,alpha)}
	E(i,\alpha) = \bE \left[ \prod_{t=1}^p \bigg( \overline{\big( \y_{\alpha_t}^{(1)} \big)_{i^{(t)}}} \big( \y_{\alpha_{t+1}}^{(1)} \big)_{i^{(t)}} \bigg) \right].
\end{align}
By the i.i.d. setting, $E(i,\alpha)=E(i',\alpha')$ if the two sequences $i$ and $\alpha$ are equivalent to $i'$ and $\alpha'$,  respectively. By \eqref{eq-moment-0.2} and \eqref{eq-0.3}, we have
\begin{align} \label{eq-moment-0.4}
	\dfrac{1}{n^k} \bE \big[ \Tr M_{n,k,m}^p \big]
	=& \dfrac{1}{n^k} \sum_{s=1}^p \sum_{\alpha \in \cJ_{s,p}(m)} \left( \prod_{t=1}^p \tau_{\alpha_t} \right) \left( \sum_{r=1}^p \sum_{i \in \cJ_{r,p}(n)} E(i,\alpha) \right)^k \nonumber \\
	=& \dfrac{1}{n^k} \sum_{s=1}^p \sum_{\alpha \in \cC_{s,p}} \left( \sum_{\varphi \in \cI_{s,m}} \prod_{t=1}^p \tau_{\varphi(\alpha_t)} \right) \left( \sum_{r=1}^p n \cdots (n-r+1) \sum_{i \in \cC_{r,p}} E(i,\alpha) \right)^k \nonumber \\
	:=& I_1 + I_2,
\end{align}
where
\begin{align*}
    I_1 =& \dfrac{1}{n^k} \sum_{s=1}^p \sum_{\alpha \in \cC_{s,p}^{(1)}} \left( \sum_{\varphi \in \cI_{s,m}} \prod_{t=1}^p \tau_{\varphi(\alpha_t)} \right) \left( \sum_{r=1}^p n \cdots (n-r+1) \sum_{i \in \cC_{r,p}} E(i,\alpha) \right)^k, \\
    I_2 =& \dfrac{1}{n^k} \sum_{s=1}^p \sum_{\alpha \in \cC_{s,p} \setminus \cC_{s,p}^{(1)}} \left( \sum_{\varphi \in \cI_{s,m}} \prod_{t=1}^p \tau_{\varphi(\alpha_t)} \right) \left( \sum_{r=1}^p n \cdots (n-r+1) \sum_{i \in \cC_{r,p}} E(i,\alpha) \right)^k.
\end{align*}

Note that the component of the base vector satisfies
\begin{align} \label{eq-base}
    \left( \y_{\beta}^{(l)} \right)_i \overline{\left( \y_{\beta}^{(l)} \right)_i} = \dfrac{1}{n}, \quad \left| \bE \left[ \left( \left( \y_{\beta}^{(l)} \right)_i \right)^p \right] \right| \le \dfrac{1}{n^{p/2}}.
\end{align}
Recall the definition of paired graph and single graph in Definition \ref{Def-paired}. For any sequence $\alpha \in \cC_{s,p}$ and $i\in \cC_{r,p}$, we have
\begin{align} \label{eq-E(i,alpha)}
    \begin{cases}
    E(i,\alpha) = n^{-p}, & g(i,\alpha) \mathrm{\ is \ a \ paired \ graph,} \\
    E(i,\alpha) =0, & g(i,\alpha) \mathrm{\ is \ a \ single \ graph,} \\
    |E(i,\alpha)| \le n^{-p}, & g(i,\alpha) \mathrm{\ otherwise.} \\
    \end{cases}
\end{align}

Firstly, we deal with $I_1$. For any $\alpha \in \cC_{s,p}^{(1)}$, by Proposition \ref{Prop-non crossing}, formula \eqref{eq-E(i,alpha)} and Proposition \ref{Prop-number of paired graph}, we obtain
\begin{align*}
    \sum_{i \in \cC_{r,p}} E(i,\alpha)
    =& n^{-p} \cdot \#\{i \in \cC_{r,p}: g(i,\alpha) \ \mathrm{is \ paired \ graph} \} \\
    =& \begin{cases}
        n^{-p} \cdot S(p+1-s,r), & r \le p+1-s, \\
        0, & r>p+1-s,
    \end{cases}
\end{align*}
where we use the notation $\#S$ for the number of elements in the set $S$. Thus, it follows from Lemma \ref{Lem-Stirling number} that
\begin{align*}
    \sum_{r=1}^p n \cdots (n-r+1) \sum_{i \in \cC_{r,p}} E(i,\alpha)
    = n^{-p} \sum_{r=1}^{p+1-s} n \cdots (n-r+1) \cdot S(p+1-s,r)
    = n^{1-s}.
\end{align*}
Hence,
\begin{align} \label{eq-I_1}
    I_1 = \sum_{s=1}^p \left( \dfrac{m}{n^k} \right)^s \sum_{\alpha \in \cC_{s,p}^{(1)}} \left( \dfrac{1}{m^s} \sum_{\varphi \in \cI_{s,m}} \prod_{t=1}^p \tau_{\varphi(\alpha_t)} \right).
\end{align}

Next, we deal with $I_2$. For any $\alpha \in \cC_{s,p} \setminus \cC_{s,p}^{(1)}$, by Lemma \ref{lem-Bai} and Proposition \ref{Prop-number of vertices}, if the graph $g(i,\alpha)$ is not a single graph for $i \in \cC_{r,p}$, then $r+s \le p$. Hence, by \eqref{eq-E(i,alpha)}, we establish
\begin{align*}
    \left| \sum_{r=1}^p n \cdots (n-r+1) \sum_{i \in \cC_{r,p}} E(i,\alpha) \right|
    \le& \sum_{r=1}^{p-s} n^{-p+r} \cdot \#\{i \in \cC_{r,p}: g(i,\alpha) \ \mathrm{is \ not \ a \ single \ graph} \}.
\end{align*}
Thus, we have
\begin{align} \label{eq-I_2}
    |I_2| \le& \dfrac{1}{n^k} \sum_{s=1}^p \sum_{\alpha \in \cC_{s,p} \setminus \cC_{s,p}^{(1)}} \left| \sum_{\varphi \in \cI_{s,m}} \prod_{t=1}^p \tau_{\varphi(\alpha_t)} \right| \left| \sum_{r=1}^p n \cdots (n-r+1) \sum_{i \in \cC_{r,p}} E(i,\alpha) \right|^k \nonumber \\
    =& \sum_{s=1}^p \left( \dfrac{m}{n^k} \right)^s \sum_{\alpha \in \cC_{s,p} \setminus \cC_{s,p}^{(1)}} \left| \dfrac{1}{m^s} \sum_{\varphi \in \cI_{s,m}} \prod_{t=1}^p \tau_{\varphi(\alpha_t)} \right| \nonumber \\
    & \times \left| \sum_{r=1}^{p-s} n^{-p+r+s-1} \cdot \#\{i \in \cC_{r,p}: g(i,\alpha) \ \mathrm{is \ not \ a \ single \ graph} \} \right|^k.
\end{align}
Note that the assumption \eqref{eq-condition-moment convergence} implies the following convergence:
\begin{align*}
    \dfrac{1}{m^s} \sum_{\varphi \in \cI_{s,m}} \prod_{t=1}^p \tau_{\varphi(\alpha_t)}
    \to \prod_{t=1}^s m_{\deg_t(\alpha)}^{(\tau)}, \quad m \to \infty.
\end{align*}
Hence, under the limiting setting \eqref{eq-def-ratio}, when $n,k \to \infty$, we can deduce from \eqref{eq-I_1} and \eqref{eq-I_2} that
\begin{align} \label{eq-I limit}
    I_1 \to \sum_{s=1}^p c^s \sum_{\alpha \in \cC_{s,p}^{(1)}} \left( \prod_{t=1}^s m_{\deg_t(\alpha)}^{(\tau)} \right),
    \quad I_2 \to 0.
\end{align}
Therefore, the proof is concluded by taking limit $n,k \to \infty$ in \eqref{eq-moment-0.4} and using \eqref{eq-I limit}.

\subsection{Proof of Theorem \ref{Thm-main2}} \label{sec:(b)}

For any $p \in \bN$, for $k\ge 2$, we compute the variance
\begin{align*}
    \Var \left( \dfrac{1}{n^k} \Tr M_{n,k,m}^p \right).
\end{align*}
The idea is similar to \cites{BYY2022}, and is sketched below. By the computation of \cite{BYY2022}*{Section 3.2}, we have
\begin{align*}
    \Var \left( \dfrac{1}{n^k} \Tr M_{n,k,m}^p \right)
    =& \dfrac{1}{n^{2k}} \sum_{\alpha,\beta \in [m]^p \atop \alpha \cap \beta \not= \emptyset} \left( \prod_{t=1}^p \tau_{\alpha_t} \tau_{\beta_t} \right) \\
    & \times \left[ \left( \sum_{i,j \in [n]^p} E'(i,\alpha;j,\beta) \right)^k - \left( \sum_{i,j \in [n]^p} E(i,\alpha) E(j,\beta) \right)^k \right],
\end{align*}
where $E(\cdot,\cdot)$ is given in \eqref{eq-E(i,alpha)}, and $E'(i,\alpha;j,\beta)$ is defined by
\begin{align*}
    E'(i,\alpha;j,\beta) = \bE \left[ \prod_{t=1}^p \bigg( \overline{\big( \y_{\alpha_t}^{(1)} \big)_{i^{(t)}}} \big( \y_{\alpha_{t+1}}^{(1)} \big)_{i^{(t)}} \overline{\big( \y_{\beta_t}^{(1)} \big)_{j^{(t)}}} \big( \y_{\beta_{t+1}}^{(1)} \big)_{j^{(t)}} \bigg) \right].
\end{align*}

Next, we join the two graphs $g(i,\alpha)$ and $g(j,\beta)$ together and keep the coincident edges. We denote by $g(i,\alpha) \cup g(j,\beta)$ the resulting graph. If there is an edge in the graph $g(i,\alpha) \cup g(j,\beta)$ that does not coincide with any other edges, then this edge must belong to $g(i,\alpha)$ or $g(j,\beta)$, which implies
\begin{align*}
    E'(i,\alpha;j,\beta) = E(i,\alpha) E(j,\beta) = 0.
\end{align*}
Thus, we only need to consider the indices such that all edges in $g(i,\alpha) \cup g(j,\beta)$ coincide with other edges. Noting that $\alpha \cap \beta \not = \emptyset$, the graph $g(i,\alpha) \cup g(j,\beta)$ is connected with $4p$ edges. Hence, if we remove orientation and glue coincident edges for the graph $g(i,\alpha) \cup g(j,\beta)$, it results in a non-directed connected graph with at most $2p$ edges, which implies that
\begin{align*}
    |(\alpha,\beta)| + |(i,j)| \le 2p+1.
\end{align*}
Hence, we have
\begin{align*}
    \Var \left( \dfrac{1}{n^k} \Tr M_{n,k,m}^p \right)
    =& \dfrac{1}{n^{2k}} \sum_{s=1}^{2p} \sum_{(\alpha,\beta) \in \cC_{s,2p} \atop \alpha \cap \beta \not= \emptyset} \left( \sum_{\varphi \in \cI_{s,m}} \prod_{t=1}^p \tau_{\varphi(\alpha_t)} \tau_{\varphi(\beta_t)} \right) \\
    & \times \left[ \left( \sum_{r=1}^{2p+1-s} n\ldots (n-r+1) \sum_{(i,j) \in \cC_{r,2p}} E'(i,\alpha;j,\beta) \right)^k \right. \\
    & \left. \quad\quad\quad - \left( \sum_{r=1}^{2p+1-s} n\ldots (n-r+1) \sum_{(i,j) \in \cC_{r,2p}} E(i,\alpha) E(j,\beta) \right)^k \right].
\end{align*}
By \eqref{eq-base}, for any sequence $\alpha,\beta,i,j$, it holds that
\begin{align*}
    |E'(i,\alpha;j,\beta)|, \ |E(i,\alpha) E(j,\beta)| \le n^{-2p}.
\end{align*}
Thus,
\begin{align*}
    & \max \left\{ \left| \sum_{r=1}^{2p+1-s} n\ldots (n-r+1) \sum_{(i,j) \in \cC_{r,2p}} E(i,\alpha) E(j,\beta) \right|, \right. \\
    & \left. \quad\quad\quad\quad\quad\quad\quad\quad \left| \sum_{r=1}^{2p+1-s} n\ldots (n-r+1) \sum_{(i,j) \in \cC_{r,2p}} E'(i,\alpha;j,\beta) \right| \right\} \\
    \le& \sum_{r=1}^{2p+1-s} n^{r-2p} \sum_{(i,j) \in \cC_{r,2p}} 1
    \le C_p n^{1-s} (1+o_n(1)),
\end{align*}
where $C_p$ is a positive number that only depends on $p$, and $o_n(1)$ is a quantity that tends to 0 as $n \to \infty$.
Hence, we obtain
\begin{align*}
    \Var \left( \dfrac{1}{n^k} \Tr M_{n,k,m}^p \right)
    \le& \dfrac{2C_p^k}{n^{k}} \sum_{s=1}^{2p} \sum_{(\alpha,\beta) \in \cC_{s,2p} \atop \alpha \cap \beta \not= \emptyset} \left( \sum_{\varphi \in \cI_{s,m}} \prod_{t=1}^p \tau_{\varphi(\alpha_t)} \tau_{\varphi(\beta_t)} \right) n^{-ks} (1+o_n(1))^k \\
    =& \dfrac{2C_p^k}{n^{k}} \sum_{s=1}^{2p} \left( \dfrac{m}{n^k} \right)^s \sum_{(\alpha,\beta) \in \cC_{s,2p} \atop \alpha \cap \beta \not= \emptyset} \left( \sum_{\varphi \in \cI_{s,m}} \prod_{t=1}^s \dfrac{1}{m} \tau_{\varphi(t)}^{\deg_t(\alpha)+\deg_t(\beta)} \right) (1+o_n(1))^k.
\end{align*}
By assumption \eqref{eq-condition-moment convergence}, we have
\begin{align*}
    \sum_{\varphi \in \cI_{s,m}} \prod_{t=1}^s \dfrac{1}{m} \tau_{\varphi(t)}^{\deg_t(\alpha)+\deg_t(\beta)}
    \to \prod_{t=1}^s m_{\deg_t(\alpha)+\deg_t(\beta)}^{(\tau)}, \quad m \to \infty.
\end{align*}
Together with \eqref{eq-def-ratio}, we establish
\begin{align*}
    \Var \left( \dfrac{1}{n^k} \Tr M_{n,k,m}^p \right)
    \le& \dfrac{2C_p^k}{n^{k}} \sum_{s=1}^{2p} c^s \sum_{(\alpha,\beta) \in \cC_{s,2p} \atop \alpha \cap \beta \not= \emptyset} \left( \prod_{t=1}^s m_{\deg_t(\alpha)+\deg_t(\beta)}^{(\tau)} \right) (1+o_n(1))^{k+1+s}.
\end{align*}
Therefore, for $k \ge 2$, we have
\begin{align*}
    \sum_{n \ge 2} \Var \left( \dfrac{1}{n^k} \Tr M_{n,k,m}^p \right) < +\infty.
\end{align*}
The proof is concluded by Borel-Cantelli's Lemma, noting that $k = k(n)$ tends to infinity as $n \to \infty$.

\subsection{Proof of Corollary \ref{Coro}} \label{sec:(c)}

We start with the uniqueness of $\mu$. We have
\begin{align*}
    \left| \sum_{s=1}^p c^s \sum_{\alpha \in \cC_{s,p}^{(1)}} \left( \prod_{t=1}^s m_{\deg_t(\alpha)}^{(\tau)} \right) \right|
    \le& \sum_{s=1}^p c^s \sum_{\alpha \in \cC_{s,p}^{(1)}} \left( \prod_{t=1}^s A^{\deg_t(\alpha)} {\deg_t(\alpha)}^{\deg_t(\alpha)} \right) \\
    \le& \sum_{s=1}^p c^s \sum_{\alpha \in \cC_{s,p}^{(1)}} A^{\sum_{t=1}^s \deg_t(\alpha)} \left( \sum_{s=1}^p \deg_t(\alpha) \right)^{\sum_{s=1}^p \deg_t(\alpha)}.
\end{align*}
By definition \eqref{eq:deg_t}, for $\alpha \in \cC_{s,p}$, we have $\sum_{t=1}^s \deg_t(\alpha) = p$. Hence, it follows from Lemma \ref{lem-Bai} that
\begin{align*}
    \left| \sum_{s=1}^p c^s \sum_{\alpha \in \cC_{s,p}^{(1)}} \left( \prod_{t=1}^s m_{\deg_t(\alpha)}^{(\tau)} \right) \right|
    \le A^p p^p \sum_{s=1}^p c^s \sum_{\alpha \in \cC_{s,p}^{(1)}} 1
    = A^p p^p \sum_{s=1}^p \dfrac{c^s}{p} \binom{p}{s-1} \binom{p}{s}.
\end{align*}
Using the inequality $\binom{p}{s} \le 2^p$ for all $0 \le s \le p$, we obtain
\begin{align*}
    \left| \sum_{s=1}^p c^s \sum_{\alpha \in \cC_{s,p}^{(1)}} \left( \prod_{t=1}^s m_{\deg_t(\alpha)}^{(\tau)} \right) \right|
    \le 4^p A^p p^p \sum_{s=1}^p \dfrac{c^s}{p}
    \le 4^p A^p (1+c)^p p^p.
\end{align*}
Hence,
\begin{align*}
    \sum_{p=1}^{\infty} \left| \sum_{s=1}^p c^s \sum_{\alpha \in \cC_{s,p}^{(1)}} \left( \prod_{t=1}^s m_{\deg_t(\alpha)}^{(\tau)} \right) \right|^{-1/p}
    \ge \sum_{p=1}^{\infty} \left( 4^p A^p (c+1)^p p^p \right)^{-1/p} = \sum_{p=1}^{\infty} \dfrac{1}{4A(c+1)p} = +\infty.
\end{align*}
Therefore, the Carleman's condition is satisfied, which implies that there exists a unique probability measure $\mu$ whose moments are given by \eqref{eq-moment}.

The uniqueness of the probability measure $\mu$ corresponding to the moments in \eqref{eq-moment} guarantees the almost sure convergence of the ESD of $M_{n,k,m}$ towards $\mu$.

If $\tau_{\alpha} = 1$ for all $1 \le \alpha \le m$, then the condition \eqref{eq-condition-moment convergence} holds with $m_q^{(\tau)} = 1$ for all $q \in \bN$. In this case, the moment sequence \eqref{eq-moment} for $\mu$ becomes
\begin{align*}
    \int_{\bR} x^p \mu(dx)
   = \sum_{s=1}^p c^s \sum_{\alpha \in \cC_{s,p}^{(1)}} 1
   = \sum_{s=1}^p \dfrac{c^s}{p} \binom{p}{s-1} \binom{p}{s}, \quad \forall p \in \bN_+,
\end{align*}
where we use Lemma \ref{lem-Bai} in the last equality. By \cite{Bai2010}*{Lemma 3.1}, this moment sequence coincides with the moment sequence of the Mar\v{c}enko-Pastur law \eqref{eq-MP law}. Therefore, the uniqueness of $\mu$ implies that $\mu$ is exactly the Mar\v{c}enko-Pastur law \eqref{eq-MP law}.

\section*{Acknowledgments} 

The author gratefully acknowledges the financial support of ERC Consolidator Grant 815703 ”STAM- FORD: Statistical Methods for High Dimensional Diffusions”. Besides, the author would like to acknowledge Tiefeng Jiang, Felix Parraud, Kevin Schnelli, Jianfeng Yao for the discussion and helpful suggestions.

\bibliographystyle{plain}
\bibliography{tensor}

\end{document}